\newtheorem{theorem}{Theorem}[section]
\newtheorem{lemma}[theorem]{Lemma}
\newtheorem{prop}[theorem]{Proposition}
\newtheorem{cor}[theorem]{Corollary}
\theoremstyle{remark}
\newtheorem{definition}[theorem]{Definition}
\def\N{{\mathbb N}}
\def\R{{\mathbb R}}
\def\Z{{\mathbb Z}}
\def\B{{\mathcal{B}}}
\def\K{{\mathcal{K}}}
\def\I{{\mathcal{I}}}
\def\L{{\mathcal{L}}}
\def\M{{\mathcal{M}}}
\newcommand{\clsp}{\overline{\operatorname{span}}}
\newcommand{\lsp}{\operatorname{span}}
\newcommand{\id}{\operatorname{id}}
\newcommand{\piso}{\operatorname{piso}}
\newcommand{\iso}{\operatorname{iso}}
\newcommand{\End}{\operatorname{End}}
\newcommand{\Aut}{\operatorname{Aut}}
\newcommand{\whitesquare}{\hfill $\whitesquare$\newline\vspace{0.4cm}}
\def\newspan{\operatorname{span}}
\numberwithin{equation}{section}
\begin{document}

\title[The partial-isometric crossed products]
{The partial-isometric crossed products by semigroups of endomorphisms are Morita equivalent to crossed products by groups}

\author[Saeid Zahmatkesh]{Saeid Zahmatkesh}
\address{Department of Mathematics, Faculty of Science, King Mongkut's University of Technology Thonburi, Bangkok 10140, THAILAND}
\email{saeid.zk09@gmail.com, saeid.kom@kmutt.ac.th}



\subjclass[2010]{Primary 46L55}
\keywords{$C^*$-algebra, endomorphism, semigroup, partial isometry, crossed product}

\begin{abstract}
Let $\Gamma^{+}$ be the positive cone of a totally ordered abelian discrete group $\Gamma$, and $\alpha$ an action of $\Gamma^{+}$ by extendible endomorphisms of a $C^*$-algebra $A$.
We prove that the partial-isometric crossed product $A\times_{\alpha}^{\piso}\Gamma^{+}$ is a full corner of a group crossed product $\B\times_{\beta}\Gamma$, where $\B$ is a subalgebra of $\ell^{\infty}(\Gamma,A)$ generated by a collection of faithful copies of $A$, and the action $\beta$ on $\B$ is induced by shift on $\ell^{\infty}(\Gamma,A)$.
We then use this realization to show that $A\times_{\alpha}^{\piso}\Gamma^{+}$ has an essential ideal $J$, which is a full corner in an ideal $\I\times_{\beta}\Gamma$ of $\B\times_{\beta}\Gamma$.
\end{abstract}
\maketitle

\section{Introduction}
\label{intro}
Let $\Gamma$ be a totally ordered abelian discrete group. For convenience, we use the additive notation $+$ for the group action. Therefore the identity element is denoted by $0$, and $-x$ denotes the inverse of an element $x\in\Gamma$.
Let $\Gamma^{+}:=\{x\in\Gamma: x\geq 0\}$ be the positive cone of $\Gamma$. Now suppose that $(A,\Gamma^{+},\alpha)$ is a dynamical system consisting of a $C^*$-algebra $A$, an action $\alpha:\Gamma^{+}\rightarrow \End (A)$ of $\Gamma^{+}$ by endomorphisms of $A$ such that
$\alpha_{0}=\id$. Note that since the $C^*$-algebra $A$ may not be unital, we require that each endomorphism $\alpha_{x}$ extends to a strictly continuous endomorphism $\overline{\alpha}_{x}$ of the multiplier algebra $\M(A)$.
This for an endomorphism $\alpha$ of $A$ happens if and only if there exists an approximate identity $(a_{\lambda})$ in $A$ and a projection $p\in \M(A)$ such that $\alpha(a_{\lambda})$ converges strictly to $p$ in $\M(A)$.
But we should note that the extendibility of $\alpha$ does not necessarily imply $\overline{\alpha}(1_{\M(A)})=1_{\M(A)}$. Authors in \cite{LR}, following the idea of isometric crossed products (see \cite{Stacey, ALNR}), for the system $(A,\Gamma^{+},\alpha)$, defined
a covariant representation such that the endomorphisms $\alpha_{x}$ are implemented by partial isometries. Then the associated partial-isometric crossed product $A\times_{\alpha}^{\piso}\Gamma^{+}$ of the system is a $C^*$-algebra generated by a universal covariant representation
such that there is a bijection between covariant representations of the system and nondegenerate representations of $A\times_{\alpha}^{\piso}\Gamma^{+}$. In particular, they analyzed the structure of the partial-isometric crossed product of the distinguished system
$(B_{\Gamma^{+}},\Gamma^{+},\tau)$, where the action $\tau$ of $\Gamma^{+}$ on the subalgebra $B_{\Gamma^{+}}$ of $\ell^{\infty}(\Gamma^{+})$ is given by the right translation. In the line of their work, the present work is actually a generalization of \cite{AZ}.
More precisely, it had previously been known that isometric crossed products are full corners in crossed products by groups (see \cite{Stacey, Adji1}). Then, the authors of \cite{AZ} discussed such an analogous view for partial-isometric crossed products. So
they showed that $A\times_{\alpha}^{\piso}\Gamma^{+}$ is a full corner in a subalgebra of adjointable operators on $\ell^{2}(\Gamma^{+})\otimes A\simeq \ell^{2}(\Gamma^{+},A)$, and
when the action $\alpha$ is given by semigroup of automorphisms, $A\times_{\alpha}^{\piso}\Gamma^{+}$ is a full corner in the group crossed product $(B_{\Gamma}\otimes A)\times\Gamma$. While this attempt successfully leads us to understand more about structure
of partial-isometric crossed products, in particular when $\Gamma^{+}$ is $\N:=\Z^{+}$ (see \cite{AZ2, LZ}), the following question has still been wondering us:

Are partial-isometric crossed products full corners in crossed products by groups in general (similar to isometric crossed products)?

So here in the present work we want to answer this question. Undoubtedly the importance of efforts toward answering such a question lies on the fact that by relating semigroup crossed products to group crossed products, we could enjoy a lot of information
on semigroup crossed products by importing them from the well-established theory of classical crossed products by groups.

By inspiration from \cite{KS}, we define a subalgebra $\B$ of the algebra $\ell^{\infty}(\Gamma,A)$ of norm bounded $A$-valued functions of $\Gamma$. Then the shift on $\ell^{\infty}(\Gamma,A)$ gives an action of $\Gamma$ on $\B$ by automorphisms.
Let $\B\times\Gamma$ be the associated group crossed product of $\B$ by $\Gamma$. Next, we construct a partial-isometric covariant pair $(\pi_{A},W)$ of $(A,\Gamma^{+},\alpha)$ in the multiplier algebra of $\B\times\Gamma$, and we show that
the corresponding homomorphism $\pi_{A}\times W$ of the partial-isometric crossed product is an isomorphism of $A\times_{\alpha}^{\piso}\Gamma^{+}$ onto a full corner of $\B\times\Gamma$. We then apply this realization to identify the ideal
$J$ of $A\times_{\alpha}^{\piso}\Gamma^{+}$ arising from the surjective homomorphism $\phi:A\times_{\alpha}^{\piso}\Gamma^{+}\rightarrow A\times_{\alpha}^{\iso}\Gamma^{+}$ (see \S\ref{sec:pre}) as a full corner in an ideal $\I\times\Gamma$ of $\B\times\Gamma$.
Furthermore, we show that the ideal $J$ is essential, and therefore we generalize the statement of \cite[Propostion 2.5]{AZ} to any totally ordered abelian (discrete) group, not only subgroups of $\R$. Consequently we see that
the main results of \cite{AZ} follow from the present work.

We begin with a preliminary section containing a summary on the theory of partial-isometric and isometric crossed products. In section \ref{sec:alg B} we introduce a subalgebra $\B$ of $\ell^{\infty}(\Gamma,A)$ and its crossed product $\B\times_{\beta}\Gamma$ by $\Gamma$, where
the action $\beta$ is given by the shift on $\ell^{\infty}(\Gamma,A)$. In section \ref{sec:full piso} a partial-isometric covariant pair of $(A,\Gamma^{+},\alpha)$ in $\M(\B\times_{\beta}\Gamma)$ will be constructed, which gives us an isomorphism
of $A\times_{\alpha}^{\piso}\Gamma^{+}$ onto a full corner of $\B\times_{\beta}\Gamma$. From this realization, the kernel of the surjective homomorphism of the partial-isometric crossed onto the isometric crossed product will be identified as a full corner in
an ideal $\I\times_{\beta}\Gamma$ of $\B\times_{\beta}\Gamma$. Moreover, we see that it is an essential ideal. In sections \ref{sec:auto} and \ref{sec:integer} we recover the main results of \cite{AZ} from the present work.

\section{Preliminaries}
\label{sec:pre}

\subsection{Morita equivalence and full corner}
\label{Morita}
What follows is a very quick recall. For more, readers may refer to \cite{RW} or \cite{Lance}. Two $C^*$-algebras $A$ and $B$ are called \emph{Morita equivalent} if there is an $A$--$B$-imprimitivity bimodule $X$. If $p$ is a projection in the multiplier algebra $\M(A)$ of $A$, 
then the  $C^*$-subalgebra $pAp$ of $A$ is called a \emph{corner} in $A$. We
say a corner $pAp$ is \emph{full} if $\overline{ApA}:=\clsp\{apb: a,b\in A\}$ is $A$. Any full corner of $A$ is Morita equivalent to $A$ via the imprimitivity bimodule $Ap$.

\subsection{The partial-isometric crossed product}
\label{piso cp}
A \emph{partial-isometric representation} of $\Gamma^{+}$ on a Hilbert space $H$ is a map $V:\Gamma^{+}\rightarrow B(H)$ such that
$V_{x}:=V(x)$ is a partial isometry, and $V_{x+y}=V_{x}V_{y}$ for all $x,y\in\Gamma^{+}$.
Note that the product $VW$ of two partial isometries $V$ and $W$ is a partial isometry precisely when $V^{*}V$ commutes with $WW^{*}$ \cite[Proposition 2.1]{LR}.
Thus a partial isometry $V$ is called a \emph{power partial isometry} if $V^{n}$ is a partial isometry for every $n\in\N$. So a
partial-isometric  representation of $\N$ is determined by a single power partial isometry $V_{1}$.
If $V$ is a partial-isometric representation of $\Gamma^{+}$, then every $V_{x}V_{x}^{*}$ commutes with $V_{t}V_{t}^{*}$, and so does
$V_{x}^{*}V_{x}$ with $V_{t}^{*}V_{t}$.

A \emph{covariant partial-isometric representation} of $(A,\Gamma^{+},\alpha)$ on a Hilbert space $H$ is a pair $(\pi,V)$ consisting of
a nondegenerate representation $\pi:A\rightarrow B(H)$ and a partial-isometric representation $V:\Gamma^{+}\rightarrow B(H)$ of $\Gamma^{+}$ such that
$\pi(\alpha_{x}(a))=V_{x}\pi(a) V_{x}^{*}$ and $V_{x}^{*}V_{x} \pi(a)=\pi(a) V_{x}^{*}V_{x}$ for all $a\in A$ and $x\in\Gamma^{+}$.

Recall that every system $(A,\Gamma^{+},\alpha)$ admits a nontrivial covariant partial-isometric representation (see \cite[Example 4.6]{LR}). Moreover, by \cite[Lemma 4.2]{LR}, every covariant pair $(\pi,V)$ extends to a partial-isometric covariant representation $(\overline{\pi},V)$ of
$(M(A),\Gamma^{+},\overline{\alpha})$, and the partial-isometric covariance is equivalent to
$\pi(\alpha_{x}(a))V_{x}=V_{x}\pi(a)$ and $V_{x}V_{x}^{*}=\overline{\pi}(\overline{\alpha}_{x}(1))$ for $a\in A$ and $x\in\Gamma^{+}$.

\begin{definition}
\label{piso cp df}
A \emph{partial-isometric crossed product} of $(A,\Gamma^{+},\alpha)$ is a triple $(B,i_{A},i_{\Gamma^{+}})$ consisting of a $C^*$-algebra $B$, a nondegenerate homomorphism $i_{A}:A\rightarrow B$, and a partial-isometric representation $i_{\Gamma^{+}}:\Gamma^{+}\rightarrow \M(B)$ such that:
\begin{itemize}
\item[(i)] the pair $(i_{A},i_{\Gamma^{+}})$ is a covariant representation of $(A,\Gamma^{+},\alpha)$ in $B$;
\item[(ii)] for every covariant partial-isometric representation $(\pi,V)$ of $(A,\Gamma^{+},\alpha)$ on a Hilbert space $H$,
there exists a nondegenerate representation
$\pi\times V: B\rightarrow B(H)$ such that $(\pi\times V) \circ i_{A}=\pi$ and $(\overline{\pi\times V}) \circ i_{\Gamma^{+}}=V$; and
\item[(iii)] $B$ is generated by $i_{A}(A)\cup i_{\Gamma^{+}}(\Gamma^{+})$, we actually have
\[ B=\overline{\newspan}\{i_{\Gamma^{+}}(x)^{*} i_{A}(a) i_{\Gamma^{+}}(y) : x,y \in\Gamma^{+}, a\in A\}. \]
\end{itemize}
By \cite[Proposition 4.7]{LR}, the partial-isometric crossed product of $(A,\Gamma^{+},\alpha)$ always exists, and it is unique up to isomorphism. Thus we write the partial-isometric crossed product $B$ as $A\times_{\alpha}^{\piso}\Gamma^{+}$.
\end{definition}

We recall that by \cite[Theorem 4.8]{LR}, a covariant representation $(\pi, V)$ of $(A,\Gamma^{+},\alpha)$ on $H$ induces a faithful representation $\pi\times V$ of $A\times_{\alpha}^{\piso}\Gamma^{+}$ if and only if $\pi$ is faithful on the range of $(1-V_{s}^{*}V_{s})$ for every $s > 0$.

\subsection{The isometric crossed product}
\label{iso cp}
The isometric crossed product of a system $(A,\Gamma^{+},\alpha)$ is defined similar to the partial-isometric crossed product of the system, such that the endomorphisms $\alpha_{x}$ are implemented by isometries instead of partial-isometries.
We here recall the definition of the isometric crossed product briefly. Readers may refer to \cite{Adji1, ALNR, Stacey} for more details on the theory of isometric crossed products.

An \emph{isometric representation} of $\Gamma^{+}$ on a Hilbert space $H$ is a map $W:\Gamma^{+}\rightarrow B(H)$ such that
$W_{x}:=W(x)$ is an isometry, and $W_{x+y}=W_{x}W_{y}$ for all $x,y\in\Gamma^{+}$.

A \emph{covariant isometric representation} of $(A,\Gamma^{+},\alpha)$ on a Hilbert space $H$ is a pair $(\pi,W)$ consisting of
a nondegenerate representation $\pi:A\rightarrow B(H)$ and an isometric representation of $W:\Gamma^{+}\rightarrow B(H)$ such that
$\pi(\alpha_{x}(a))=W_{x}\pi(a) W_{x}^{*}$ for all $a\in A$ and $x\in\Gamma^{+}$.

Then the \emph{isometric crossed product} $A\times_{\alpha}^{\iso}\Gamma^{+}$ of $(A,\Gamma^{+},\alpha)$ is generated by a universal covariant isometric representation $(k_{A},k_{\Gamma^{+}})$, such that there is a bijection $(\pi,W)\mapsto \pi\times W$
between covariant isometric representations of $(A,\Gamma^{+},\alpha)$ and nondegenerate representations of $A\times_{\alpha}^{\piso}\Gamma^{+}$.
\\

Now, as it is mentioned in \cite[\S 2]{AZ}, we have a nondegenerate surjective homomorphism $\phi:(A\times_{\alpha}^{\piso}\Gamma^{+},i_{A},i_{\Gamma^{+}})\rightarrow (A\times_{\alpha}^{\iso}\Gamma^{+},k_{A},k_{\Gamma^{+}})$ induced by the pair $(k_{A},k_{\Gamma^{+}})$ such that
$$\phi(i_{\Gamma^{+}}(x)^{*} i_{A}(a) i_{\Gamma^{+}}(y))=k_{\Gamma^{+}}(x)^{*} k_{A}(a) k_{\Gamma^{+}}(y),$$ for all $a\in A$ and $x,y\in\Gamma^{+}$. Therefore we have the following short exact sequence:
\begin{align}
\label{exseq1}
0 \longrightarrow \ker \phi \stackrel{}{\longrightarrow} A\times_{\alpha}^{\piso}\Gamma^{+} \stackrel{\phi}{\longrightarrow} A\times_{\alpha}^{\iso}\Gamma^{+} \longrightarrow 0,
\end{align}
where $\ker \phi$ is the ideal
$$J:=\clsp\{i_{\Gamma^{+}}(x)^{*}i_{A}(a)(1-i_{\Gamma^{+}}(t)^{*}i_{\Gamma^{+}}(t))i_{\Gamma^{+}}(y): a\in A, x,y,t\in\Gamma^{+}\}$$ in $A\times_{\alpha}^{\piso}\Gamma^{+}$.
Then in \cite[\S 4]{AZ}, when $\Gamma^{+}=\N$, the ideal $J$ is identified as a full corner of
the algebra $\K(\ell^{2}(\N,A))$ of compact operators on the Hilbert $A$-module $\ell^{2}(\N,A)$. Moreover, when the action $\alpha$ in the system $(A,\Gamma^{+},\alpha)$ is given by semigroups of automorphisms, the ideal $J$ is a full corner of a classical crossed product
by $\Gamma$ (see \cite[\S 5]{AZ}). But the identification of the ideal $J$ in general with familiar terms remained unavailable. So here, first, we show that the partial-isometric crossed product $A\times_{\alpha}^{\piso}\Gamma^{+}$
is a full corner of a classical crossed product by $\Gamma$, from which we see that the ideal $J$ is also a full corner of a group crossed product. Consequently the identification of the ideal $J$ in \cite{AZ} will be recovered here.

Before we proceed to the next section, for Lemma \ref{B extension}, we need to recall that the dynamical system $(A,\Gamma^{+},\alpha)$, in which the action $\alpha$ of $\Gamma^{+}$ is given by (extendible) injective endomorphisms of $A$,
gives a direct system $(A_{s},\varphi_{s}^{t})_{s,t\in\Gamma}$ such that $A_{s}=A$ for every $s\in\Gamma$, and $\varphi_{s}^{t}:A_{s}\rightarrow A_{t}$ is given by $\alpha_{t-s}$ for all $s<t\in\Gamma$.
Let $A_{\infty}$ be the direct limit of the direct system. If $\alpha^{s}:A_{s}\rightarrow A_{\infty}$ is the canonical homomorphism (embedding) of $A_{s}$ into $A_{\infty}$ for every $s\in\Gamma$, then $\cup_{s\in\Gamma}\alpha^{s}(A_{s})$ is a dense subalgebra of $A_{\infty}$.
But note that since $\alpha^{s}(a)=\alpha^{0}(\alpha_{-s}(a))$ for every $s<0$ in $\Gamma$, it follows that $A_{\infty}=\overline{\cup_{s\in\Gamma}\alpha^{s}(A_{s})}=\overline{\cup_{s\in\Gamma^{+}}\alpha^{s}(A_{s})}$.

\section{The $C^*$-algebra $\B$ and its crossed product by $\Gamma$}
\label{sec:alg B}
Let $(A,\Gamma^{+},\alpha)$ be a dynamical system in which $\Gamma^{+}$ is the positive cone of a totally ordered abelian group $\Gamma$, and $\alpha$ is an action of $\Gamma^{+}$ by extendible endomorphisms of a $C^*$-algebra $A$. Let $\ell^{\infty}(\Gamma,A)$ be the $C^*$-algebra of all norm bounded $A$-valued functions of $\Gamma$. For every $x\in\Gamma$, define a map $\mu_{x}:A\rightarrow\ell^{\infty}(\Gamma,A)$ by
\[
(\mu_{x}(a))(y)=
   \begin{cases}
      \alpha_{y-x}(a) &\textrm{if}\empty\ \text{$y\geq x$}\\
      0 &\textrm{if}\empty\ \text{$y<x$}.
   \end{cases}
\]
One can see that each map $\mu_{x}$ is an injective $*$-homomorphism (isometry). Then define $\B$ to be the $C^*$-subalgebra of $\ell^{\infty}(\Gamma,A)$ generated by $\{\mu_{x}(a):x\in\Gamma, a\in A\}$. Since by some simple calculations,
we have $\mu_{x}(a)\mu_{y}(b)=\mu_{z}(\alpha_{z-x}(a)\alpha_{z-y}(b))$, where $z=\textrm{max}\{x,y\}$, and $\mu_{x}(a)^{*}=\mu_{x}(a^{*})$, it follows that $$\B=\clsp\{\mu_{x}(a):x\in\Gamma, a\in A\}.$$

Moreover note that the elements of $\B$ satisfy the following property:
\begin{lemma}
\label{B property}
Let $\xi\in\B$. For any $\varepsilon>0$, there are $y,z\in\Gamma$ such that if $x<y$, then $\|\xi(x)\|<\varepsilon$, and if $x\geq z$, then
$\|\xi(x)-\alpha_{x-z}(\xi(z))\|<\varepsilon$.
\end{lemma}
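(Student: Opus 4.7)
The plan is to verify the statement first for the generators $\mu_x(a)$ (where both conclusions actually hold exactly), extend to finite linear combinations, and then pass to general $\xi \in \B$ by a routine $\varepsilon/3$ density argument, using $\B = \clsp\{\mu_x(a) : x \in \Gamma, a \in A\}$.

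For a single generator $\mu_x(a)$, the left-tail condition is immediate: if $x' < x$, then $\mu_x(a)(x') = 0$, so taking $y := x$ gives vanishing (not just smallness) to the left of $y$. For the right tail, set $z := x$; then for any $x' \geq z$ one has
\[
\mu_x(a)(x') = \alpha_{x'-x}(a) = \alpha_{x'-z}\bigl(\alpha_{z-x}(a)\bigr) = \alpha_{x'-z}\bigl(\mu_x(a)(z)\bigr),
\]
which is an exact equality. For a finite sum $\eta = \sum_{i=1}^{n} \mu_{x_i}(a_i)$, set $y := \min_i x_i$ and $z := \max_i x_i$. For $x' < y$, every summand vanishes at $x'$, so $\eta(x') = 0$. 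For $x' \geq z \geq x_i$, additivity of the calculation above (with $\alpha_{x'-z}$ pulled out of the sum) yields $\eta(x') = \alpha_{x'-z}(\eta(z))$ exactly.

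Now given $\xi \in \B$ and $\varepsilon > 0$, density provides a finite sum $\eta = \sum_{i=1}^{n} \mu_{x_i}(a_i)$ with $\|\xi - \eta\| < \varepsilon/3$. Let $y$ and $z$ be chosen for $\eta$ as above. For $x' < y$,
\[
\|\xi(x')\| \leq \|\xi(x') - \eta(x')\| + \|\eta(x')\| \leq \|\xi - \eta\| + 0 < \varepsilon.
\]
For $x' \geq z$, using that $\alpha_{x'-z}$ is a $*$-homomorphism (hence contractive) together with the triangle inequality,
\[
\|\xi(x') - \alpha_{x'-z}(\xi(z))\| \leq \|\xi(x') - \eta(x')\| + \|\eta(x') - \alpha_{x'-z}(\eta(z))\| + \|\alpha_{x'-z}(\eta(z) - \xi(z))\| \leq 2\|\xi - \eta\| < \varepsilon,
\]
the middle term being zero by the exact identity for $\eta$.

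There is no real obstacle here: the content of the lemma is entirely concentrated in the explicit formula for $\mu_x(a)$, which makes both properties exact on generators. Everything else is density plus triangle inequality, and the only small point requiring care is that $\alpha_{x'-z}$ remains a contraction when applied to the error term $\eta(z) - \xi(z) \in A$, which holds since $\alpha_{x'-z}$ is a $*$-homomorphism on $A$ (no need to invoke the extension to $\M(A)$).
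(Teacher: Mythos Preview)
Your proof is correct and follows essentially the same approach as the paper: approximate $\xi$ by a finite linear combination of generators, observe that both tail conditions hold \emph{exactly} on such a finite sum with $y=\min_i x_i$ and $z=\max_i x_i$, and then close with the triangle inequality (the paper uses $\varepsilon/2$ rather than your $\varepsilon/3$, but this is immaterial since only two error terms ever appear).
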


\begin{proof}
For any $\epsilon>0$, there is a finite sum $\sum_{i=0}^{n}\mu_{y_{i}}(a_{i})$ such that
\begin{align}
\label{eq9}
\|\xi-\sum_{i=0}^{n}\mu_{y_{i}}(a_{i})\|<\varepsilon/2.
\end{align}
Take $y=\textrm{min}\{y_{0},...,y_{n}\}$. Then for all $x\in\Gamma$, we have
$$\|\big(\xi-\sum_{i=0}^{n}\mu_{y_{i}}(a_{i})\big)(x)\|\leq\|\xi-\sum_{i=0}^{n}\mu_{y_{i}}(a_{i})\|<\varepsilon/2<\varepsilon,$$ and therefore $\|\big(\xi-\sum_{i=0}^{n}\mu_{y_{i}}(a_{i})\big)(x)\|<\varepsilon$.
So, as $$\|\big(\xi-\sum_{i=0}^{n}\mu_{y_{i}}(a_{i})\big)(x)\|=\|\xi(x)-\sum_{i=0}^{n}(\mu_{y_{i}}(a_{i}))(x)\|,$$ it follows that
$$\|\xi(x)-\sum_{i=0}^{n}(\mu_{y_{i}}(a_{i}))(x)\|<\varepsilon.$$
Now, if $x<y$, then $(\mu_{y_{i}}(a_{i}))(x)=0$ for each $0\leq i\leq n$, because $x<y_{i}$. Thus we obtain $\|\xi(x)\|<\varepsilon$ for every $x<y$.

Next, take $z=\textrm{max}\{y_{0},...,y_{n}\}$, and for convenience, let $\xi_{n}=\sum_{i=0}^{n}\mu_{y_{i}}(a_{i})$. Since $\|\xi-\xi_{n}\|<\varepsilon/2$ by (\ref{eq9}), for every $x\geq z$, we get
\begin{eqnarray*}
\begin{array}{rcl}
\|\xi(x)-\alpha_{x-z}(\xi(z))\|&=&\|(\xi-\xi_{n}+\xi_{n})(x)-\alpha_{x-z}\big((\xi-\xi_{n}+\xi_{n})(z)\big)\|\\
&=&\|(\xi-\xi_{n})(x)+\xi_{n}(x)-\alpha_{x-z}\big((\xi-\xi_{n})(z)+\xi_{n}(z)\big)\|\\
&=&\|(\xi-\xi_{n})(x)+\xi_{n}(x)-\alpha_{x-z}\big((\xi-\xi_{n})(z)\big)-\alpha_{x-z}\big(\xi_{n}(z)\big)\|\\
&\leq&\|(\xi-\xi_{n})(x)\|+\|-\alpha_{x-z}\big((\xi-\xi_{n})(z)\big)\|+\|\xi_{n}(x)-\alpha_{x-z}\big(\xi_{n}(z)\big)\|\\
&\leq&\|\xi-\xi_{n}\|+\|(\xi-\xi_{n})(z)\|+\|\xi_{n}(x)-\alpha_{x-z}\big(\xi_{n}(z)\big)\|\\
&\leq&\|\xi-\xi_{n}\|+\|\xi-\xi_{n}\|+\|\xi_{n}(x)-\alpha_{x-z}\big(\xi_{n}(z)\big)\|\\
&<&\varepsilon/2+\varepsilon/2+\|\xi_{n}(x)-\alpha_{x-z}\big(\xi_{n}(z)\big)\|=\varepsilon+\|\xi_{n}(x)-\alpha_{x-z}\big(\xi_{n}(z)\big)\|.\\
\end{array}
\end{eqnarray*}
Therefore $$\|\xi(x)-\alpha_{x-z}(\xi(z))\|<\varepsilon+\|\xi_{n}(x)-\alpha_{x-z}\big(\xi_{n}(z)\big)\|.$$ But since $x\geq z$, we actually have $\|\xi_{n}(x)-\alpha_{x-z}\big(\xi_{n}(z)\big)\|=0$ on the right hand side of the inequality. This is because
\begin{eqnarray*}
\begin{array}{rcl}
\xi_{n}(x)-\alpha_{x-z}\big(\xi_{n}(z)\big)&=&\sum_{i=0}^{n}(\mu_{y_{i}}(a_{i}))(x)-\alpha_{x-z}\big(\sum_{i=0}^{n}(\mu_{y_{i}}(a_{i}))(z)\big)\\
&=&\sum_{i=0}^{n}\alpha_{x-y_{i}}(a_{i})-\alpha_{x-z}\big(\sum_{i=0}^{n}\alpha_{z-y_{i}}(a_{i})\big)\\
&=&\sum_{i=0}^{n}\alpha_{x-y_{i}}(a_{i})-\sum_{i=0}^{n}\alpha_{x-z}\big(\alpha_{z-y_{i}}(a_{i})\big)\\
&=&\sum_{i=0}^{n}\alpha_{x-y_{i}}(a_{i})-\sum_{i=0}^{n}\alpha_{x-y_{i}}(a_{i})=0.
\end{array}
\end{eqnarray*}
So it follows that $\|\xi(x)-\alpha_{x-z}(\xi(z))\|<\varepsilon$ for every $x\geq z$.
\end{proof}

\begin{lemma}
\label{mux}
Each homomorphism $\mu_{x}:A\rightarrow\mathcal{B}$ is extendible, meaning that it extends to a strictly continuous homomorphism $\overline{\mu_{x}}:\M(A)\rightarrow\M(\B)$ of multiplier algebras.
\end{lemma}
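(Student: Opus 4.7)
My plan is to verify the standard criterion for extendibility recalled in the introduction: for any bounded approximate identity $(e_\lambda)$ of $A$, I will show that $\mu_x(e_\lambda)$ converges strictly in $\M(\B)$ to a projection $P_x$. Given this, the extension $\overline{\mu_x}:\M(A)\to\M(\B)$ is defined in the usual way by $\overline{\mu_x}(m)\xi := \lim_\lambda \mu_x(me_\lambda)\xi$ for $\xi\in\B$ and $m\in\M(A)$, and strict continuity is automatic.

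To establish strict convergence, I reduce the problem to generators. Since $\B = \clsp\{\mu_y(b):y\in\Gamma,\, b\in A\}$ and $\|\mu_x(e_\lambda)\|\le 1$ for all $\lambda$, a standard $\varepsilon/3$ argument shows that it suffices to prove $\mu_x(e_\lambda)\mu_y(b)$ converges in norm for each fixed $y\in\Gamma$ and $b\in A$. I then use the multiplication rule $\mu_x(a)\mu_y(b)=\mu_z(\alpha_{z-x}(a)\alpha_{z-y}(b))$ with $z=\max\{x,y\}$ established in section \ref{sec:alg B}, splitting into two cases. When $y\geq x$, the product simplifies to $\mu_y(\alpha_{y-x}(e_\lambda)\,b)$; extendibility of $\alpha_{y-x}$ gives $\alpha_{y-x}(e_\lambda)\to\overline{\alpha}_{y-x}(1_{\M(A)})$ strictly in $\M(A)$, so $\alpha_{y-x}(e_\lambda)\,b\to \overline{\alpha}_{y-x}(1)\,b$ in norm in $A$, and the product converges in norm in $\B$ to $\mu_y(\overline{\alpha}_{y-x}(1)\,b)$. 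When $y<x$, the product simplifies to $\mu_x(e_\lambda\,\alpha_{x-y}(b))$, which converges in norm to $\mu_x(\alpha_{x-y}(b))$ since $(e_\lambda)$ is an approximate identity for $A$.

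The limits of $\mu_x(e_\lambda)\xi$ and (by the same argument applied to $\xi\mu_x(e_\lambda)$, or via $\mu_x(a)^*=\mu_x(a^*)$) of $\xi\mu_x(e_\lambda)$ then define a double centralizer, and hence an element $P_x\in\M(\B)$ with $\mu_x(e_\lambda)\to P_x$ strictly. Self-adjointness of the $e_\lambda$ gives $P_x^*=P_x$, and running the same limit through $(e_\lambda^2)$, which is also an approximate identity for $A$, yields $P_x^2=P_x$. Finally, the formula for $\overline{\mu_x}$ on all of $\M(A)$ is verified by running the same two-case analysis on generators with $b$ replaced by expressions involving $m\in\M(A)$ and invoking strict continuity of each $\overline{\alpha}_{y-x}$. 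The main obstacle is purely bookkeeping in the double-centralizer verification and in checking $P_x^2=P_x$; the genuinely analytic content is the extendibility of each $\alpha_s$, which enters precisely at the $y\geq x$ case above.
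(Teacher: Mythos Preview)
Your proof is correct and follows essentially the same route as the paper: reduce to spanning elements $\mu_y(b)$, apply the product formula $\mu_x(e_\lambda)\mu_y(b)=\mu_{\max\{x,y\}}(\cdots)$, and in the case $y\ge x$ invoke extendibility of $\alpha_{y-x}$ to get norm convergence. The only difference is packaging: the paper first builds an auxiliary algebra $\overline{\B}\subset\ell^\infty(\Gamma,\M(A))$ spanned by maps $\gamma_r(m)$ (defined like $\mu_r$ but using $\overline{\alpha}$), observes that $\B$ is an essential ideal of $\overline{\B}$ so that $\overline{\B}\hookrightarrow\M(\B)$, and then simply \emph{names} the limiting projection as $p_x=\gamma_x(1)$ before checking convergence to it; you instead assemble $P_x$ abstractly as a double centralizer and verify it is a projection afterwards. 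The paper's version has the mild advantage of yielding the explicit identification $\overline{\mu_x}=\gamma_x$ for free, which is used repeatedly later (e.g.\ in computing $\overline{\mu_0}(1)\overline{\mu_x}(1)$), whereas your approach would require a separate one-line check of that formula.
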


\begin{proof}
For proof, note that the extendibility of each $\mu_{x}$ essentially follows from the extendibility of each endomorphism $\alpha_{x}$. So, let  $\{a_{i}\}$ be an approximate identity in $A$. We show that there is a projection $p_{x}\in\M(\B)$
such that $\mu_{x}(a_{i})\rightarrow p_{x}$ in $\M(\B)$ strictly.
To do this, it is enough to see that $\mu_{x}(a_{i})\mu_{y}(a)\rightarrow p_{x}\mu_{y}(a)$ and $\mu_{y}(a)\mu_{x}(a_{i})\rightarrow \mu_{y}(a)p_{x}$ in the norm of $\B$ for every $a\in A$ and $y\in \Gamma$. Take the algebra $\ell^{\infty}(\Gamma,\M(A))$ which contains $\ell^{\infty}(\Gamma,A)$ as an essential ideal. Then similar to $\B$, let $\overline{\B}$ be the $C^*$-subalgebra of $\ell^{\infty}(\Gamma,\M(A))$ spanned by $\{\gamma_{r}(m):r\in\Gamma, m\in \M(A)\}$, where each $\gamma_{r}:\M(A)\rightarrow\ell^{\infty}(\Gamma,\M(A))$ is a map defined by
\[
(\gamma_{r}(m))(s)=
   \begin{cases}
      \overline{\alpha}_{s-r}(m) &\textrm{if}\empty\ \text{$s\geq r$}\\
      0 &\textrm{if}\empty\ \text{$s<r$}.
   \end{cases}
\]
So, $\gamma_{r}$ is an isometry for every $r\in\Gamma$, such that $\gamma_{r}|_{A}=\mu_{r}$. Now since $\B$ sits in $\overline{\B}$ as an essential ideal, $\overline{\B}$ is embedded in $\M(\B)$ as a $C^*$-subalgebra. Let $p_{x}=\gamma_{x}(1)$ for every $x\in\Gamma$, which is a projection in $\M(\B)$. If $x\leq y$, then $\mu_{x}(a_{i})\mu_{y}(a)=\mu_{y}(\alpha_{y-x}(a_{i})a)$ and $\mu_{y}(a)\mu_{x}(a_{i})=\mu_{y}(a\alpha_{y-x}(a_{i}))$, which converge to $\mu_{y}(\overline{\alpha}_{y-x}(1)a)$ and $\mu_{y}(a\overline{\alpha}_{y-x}(1))$ in the norm of $\B$ respectively.
This is because $\alpha_{y-x}$ is extendible and $\mu_{y}$ is an isometry. But $\mu_{y}(\overline{\alpha}_{y-x}(1)a)=p_{x}\mu_{y}(a)$ and $\mu_{y}(a\overline{\alpha}_{y-x}(1))=\mu_{y}(a)p_{x}$, and therefore $\mu_{x}(a_{i})\mu_{y}(a)\rightarrow p_{x}\mu_{y}(a)$ and $\mu_{y}(a)\mu_{x}(a_{i})\rightarrow \mu_{y}(a)p_{x}$ when $x\leq y$. A similar discussion shows that these also hold when $x>y$. Thus each $\mu_{x}$ is extendible.
\end{proof}
Note that it then follows from Lemma \ref{mux} that $\overline{\mu_{x}}=\gamma_{x}$ for every $x\in\Gamma$.
\\
Now let $\I$ be the $C^*$-subalgebra of $\B$ generated by $\{\mu_{x}(a)-\mu_{y}(\alpha_{y-x}(a)):x<y\in\Gamma, a\in A\}$. We actually have
$$\I=\clsp\{\mu_{x}(a)-\mu_{y}(\alpha_{y-x}(a)):x<y\in\Gamma, a\in A\}.$$ Moreover, by some computations on spanning elements, one can see that $\I$ is in fact an ideal of $\B$. We claim that $\I$ is an essential ideal. This is because if $\xi\I=0$ for some $\xi\in\B$, then for each $x\in\Gamma$, $$\xi[\mu_{x}(\xi(x)^{*})-\mu_{y}(\alpha_{y-x}(\xi(x)^{*}))]=0,$$ where $y\in\Gamma$ with $y>x$.
So we must have $\xi(x)\xi(x)^{*}=0$ in $A$ for each $x\in\Gamma$, which implies that $\xi(x)=0$. Thus $\xi=0$, and therefore $\I$ is essential.

By a simple calculation, we also have
\begin{align}
\label{eq3}
[\mu_{x}(a)-\mu_{y}(\alpha_{y-x}(a))][\mu_{x}(b)-\mu_{y}(\alpha_{y-x}(b))]=\mu_{x}(ab)-\mu_{y}(\alpha_{y-x}(ab)),
\end{align}
for all $a,b\in A$ and $x<y\in\Gamma$. This equation will be used later in Lemma \ref{ker}.
\begin{lemma}
\label{B extension}
Let $(A,\Gamma^{+},\alpha)$ be a dynamical system in which the action $\alpha$ of $\Gamma^{+}$ is given by (extendible) injective endomorphisms of $A$. Then there is a surjective homomorphism $\sigma:\B\rightarrow A_{\infty}$ such that $\sigma(\mu_{y}(a))=\alpha^{y}(a)$ for every $a\in A$ and $y\in\Gamma$. Moreover,
\begin{align}
\label{ker sigma}
\ker \sigma=\{\xi\in\B:\ \textrm{the net}\ \{\|\xi(x)\|\}_{x\in\Gamma}\ \textrm{converges to}\ 0\},
\end{align}
which contains the ideal $\I$.
\end{lemma}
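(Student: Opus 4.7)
The plan is to define $\sigma$ directly on all of $\B$ by the net-limit formula $\sigma(\xi):=\lim_{x\to+\infty}\alpha^{x}(\xi(x))$, where the limit is taken along $\Gamma$ (with its natural upward ordering) inside the complete $C^*$-algebra $A_{\infty}$. Because each connecting endomorphism $\alpha_{t-s}$ is assumed injective, every canonical map $\alpha^{x}:A_{x}\to A_{\infty}$ is an isometric $*$-homomorphism, so in particular $\|\alpha^{x}(\xi(x))\|=\|\xi(x)\|$. Existence of the limit is the crucial step and is exactly where Lemma \ref{B property} enters: given $\varepsilon>0$, choose $z\in\Gamma$ so that $\|\xi(x)-\alpha_{x-z}(\xi(z))\|<\varepsilon$ for every $x\geq z$, apply $\alpha^{x}$, and use the direct-system identity $\alpha^{x}\circ\alpha_{x-z}=\alpha^{z}$ to obtain $\|\alpha^{x}(\xi(x))-\alpha^{z}(\xi(z))\|<\varepsilon$ for $x\geq z$. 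Thus $(\alpha^{x}(\xi(x)))_{x\in\Gamma}$ is a Cauchy net and therefore converges in $A_{\infty}$.

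I would then verify that $\sigma$ is a bounded $*$-homomorphism taking the prescribed values on generators. Linearity, involution, and multiplication pass through the limit because the operations in $\B\subseteq\ell^{\infty}(\Gamma,A)$ are computed pointwise, each $\alpha^{x}$ is a $*$-homomorphism, and these continuous operations commute with net limits; the isometry of the $\alpha^{x}$ also gives $\|\sigma(\xi)\|\leq\|\xi\|$. On a spanning element $\mu_{y}(a)$, the net $\alpha^{x}(\mu_{y}(a)(x))$ is $0$ for $x<y$ and equals $\alpha^{x}(\alpha_{x-y}(a))=\alpha^{y}(a)$ for $x\geq y$, hence is eventually constant with limit $\alpha^{y}(a)$, giving the formula $\sigma(\mu_{y}(a))=\alpha^{y}(a)$.

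Surjectivity is then immediate: the image of $\sigma$ is a closed $*$-subalgebra of $A_{\infty}$ (images of $C^*$-algebra homomorphisms are automatically closed) containing every $\alpha^{y}(A)$, hence equals $A_{\infty}=\overline{\bigcup_{y\in\Gamma}\alpha^{y}(A_{y})}$. The equality in (\ref{ker sigma}) drops out of the Cauchy estimate of the first paragraph together with the identity $\|\alpha^{x}(\xi(x))\|=\|\xi(x)\|$: if the norms $\|\xi(x)\|$ tend to $0$ then $\sigma(\xi)=0$, and conversely if $\sigma(\xi)=0$ the estimate forces $\|\xi(x)\|\to 0$ as $x\to +\infty$. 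The inclusion $\I\subseteq\ker\sigma$ is then checked on the spanning elements $\mu_{y}(a)-\mu_{z}(\alpha_{z-y}(a))$ with $y<z$: these vanish identically on $\{x:x\geq z\}$ since $\alpha_{x-y}(a)=\alpha_{x-z}(\alpha_{z-y}(a))$ there, so their norm-nets are eventually zero. The main obstacle is that $\sigma$ has to be defined directly on all of $\B$ rather than first on the span and then extended; the viability of this direct definition rests entirely on the asymptotic stability provided by Lemma \ref{B property} combined with the injectivity hypothesis, which is what makes each $\alpha^{x}$ isometric and therefore lets pointwise estimates in $A$ survive the transition to $A_{\infty}$.
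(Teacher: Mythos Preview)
Your argument is correct and takes a genuinely different route from the paper. The paper defines $\sigma$ first on the algebraic span $\lsp\{\mu_{y}(a)\}$ by $\sum_i\mu_{y_i}(a_i)\mapsto\sum_i\alpha^{y_i}(a_i)$, proves well-definedness and contractivity by evaluating both sides at $z=\max\{y_i\}$, and then extends by continuity; the kernel identity (\ref{ker sigma}) is then obtained by two separate $\varepsilon$-arguments that again pass through finite approximations by such sums. You instead define $\sigma$ globally by the net-limit $\sigma(\xi)=\lim_{x}\alpha^{x}(\xi(x))$, and this is exactly where Lemma~\ref{B property} earns its keep: it gives the Cauchy condition via $\alpha^{x}\circ\alpha_{x-z}=\alpha^{z}$. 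The payoff is that the kernel description becomes essentially tautological, since $\|\alpha^{x}(\xi(x))\|=\|\xi(x)\|$ turns convergence of $\sigma(\xi)$ to $0$ into convergence of $\|\xi(x)\|$ to $0$ and conversely. The paper's approach is slightly more elementary in that it does not invoke Lemma~\ref{B property} at all, but yours is cleaner and explains structurally why (\ref{ker sigma}) holds; it also makes transparent that the injectivity hypothesis is used precisely to make each $\alpha^{x}$ isometric.
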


\begin{proof}
Define a map of the dense subalgebra $\lsp\{\mu_{y}(a):y\in\Gamma, a\in A\}$ of $\B$ into $A_{\infty}$ such that $\sum_{i=0}^{n}\mu_{y_{i}}(a_{i})\mapsto \sum_{i=0}^{n}\alpha^{y_{i}}(a_{i})$. This map is well-defined, because if $z=\textrm{max}\{y_{0},...,y_{n}\}$, then
\begin{eqnarray*}
\begin{array}{rcl}
\|\sum_{i=0}^{n}\alpha^{y_{i}}(a_{i})\|&=&\|\sum_{i=0}^{n}\alpha^{z}(\alpha_{z-y_{i}}(a_{i}))\|\\
&=&\|\alpha^{z}\big(\sum_{i=0}^{n}\alpha_{z-y_{i}}(a_{i})\big)\|\\
&=&\|\sum_{i=0}^{n}\alpha_{z-y_{i}}(a_{i})\|\\
&=&\|\sum_{i=0}^{n}(\mu_{y_{i}}(a_{i}))(z)\|\\
&=&\|\big(\sum_{i=0}^{n}\mu_{y_{i}}(a_{i})\big)(z)\|\\
&\leq&\|\sum_{i=0}^{n}\mu_{y_{i}}(a_{i})\|.
\end{array}
\end{eqnarray*}

Therefore we have $$\|\sum_{i=0}^{n}\alpha^{y_{i}}(a_{i})\|\leq\|\sum_{i=0}^{n}\mu_{y_{i}}(a_{i})\|,$$ which implies that the map $\sum_{i=0}^{n}\mu_{y_{i}}(a_{i})\mapsto \sum_{i=0}^{n}\alpha^{y_{i}}(a_{i})$ is well-defined. It is also linear and bounded, and hence it extends to
a bounded linear map $\sigma:\B\rightarrow A_{\infty}$ of $\B$ into $A_{\infty}$ such that $\sigma(\mu_{y}(a))=\alpha^{y}(a)$. Moreover,
$$\sigma(\mu_{y}(a))^{*}=\alpha^{y}(a)^{*}=\alpha^{y}(a^{*})=\sigma(\mu_{y}(a^{*}))=\sigma(\mu_{y}(a)^{*}),$$ and
$$\sigma(\mu_{x}(a)\mu_{y}(b))=\sigma\big(\mu_{z}(\alpha_{z-x}(a)\alpha_{z-y}(b))\big)=\alpha^{z}(\alpha_{z-x}(a)\alpha_{z-y}(b))=\alpha^{x}(a)\alpha^{y}(b),$$ where $z=\textrm{max}\{x,y\}$, imply that $\sigma$ is indeed a $*$-homomorphism.
The surjectivity of $\sigma$ cane be easily seen from $\sigma(\mu_{y}(a))=\alpha^{y}(a)$ for all $a\in A$ and $y\in\Gamma$.

Now, before we identify $\ker \sigma$, it is not difficult to see that $\ker \sigma$ contains the ideal $\I$. This is because for any spanning element $\mu_{x}(a)-\mu_{y}(\alpha_{y-x}(a))$ of $\I$, where $a\in A$ and $x<y\in\Gamma$, we have
$$\sigma(\mu_{x}(a)-\mu_{y}(\alpha_{y-x}(a)))=\alpha^{x}(a)-\alpha^{y}(\alpha_{y-x}(a))=\alpha^{y}(\alpha_{y-x}(a))-\alpha^{y}(\alpha_{y-x}(a))=0.$$

To prove (\ref{ker sigma}), first, let $\xi\in \ker \sigma$. For every $\varepsilon>0$, there is a finite sum $\sum_{i=0}^{n}\mu_{y_{i}}(a_{i})$ such that $\|\sum_{i=0}^{n}\mu_{y_{i}}(a_{i})-\xi\|<\varepsilon/2$. Then
\begin{eqnarray}
\label{eq6}
\begin{array}{rcl}
\|\sum_{i=0}^{n}\alpha^{y_{i}}(a_{i})\|&=&\|\sigma(\sum_{i=0}^{n}\mu_{y_{i}}(a_{i}))-\sigma(\xi)\|\\
&=&\|\sigma(\sum_{i=0}^{n}\mu_{y_{i}}(a_{i})-\xi)\|\\
&\leq&\|\sum_{i=0}^{n}\mu_{y_{i}}(a_{i})-\xi\|<\varepsilon/2.
\end{array}
\end{eqnarray}
Therefore it follows that, if $z=\textrm{max}\{y_{0},...,y_{n}\}$, then for every $x\geq z$, since
\begin{eqnarray}
\label{eq12}
\begin{array}{rcl}
\|\sum_{i=0}^{n}\alpha_{x-y_{i}}(a_{i})\|&=&\|\sum_{i=0}^{n}\alpha_{x-z}\big(\alpha_{z-y_{i}}(a_{i})\big)\|\\
&=&\|\alpha_{x-z}\big(\sum_{i=0}^{n}\alpha_{z-y_{i}}(a_{i})\big)\|\\
&=&\|\sum_{i=0}^{n}\alpha_{z-y_{i}}(a_{i})\|\\
&=&\|\alpha^{z}\big(\sum_{i=0}^{n}\alpha_{z-y_{i}}(a_{i})\big)\|\\
&=&\|\sum_{i=0}^{n}\alpha^{z}(\alpha_{z-y_{i}}(a_{i}))\|\\
&=&\|\sum_{i=0}^{n}\alpha^{y_{i}}(a_{i})\|,
\end{array}
\end{eqnarray}
we have
 \begin{align}
\label{eq7}
\|\sum_{i=0}^{n}\alpha_{x-y_{i}}(a_{i})\|=\|\sum_{i=0}^{n}\alpha^{y_{i}}(a_{i})\|\leq \|\sum_{i=0}^{n}\mu_{y_{i}}(a_{i})-\xi\|<\varepsilon/2.
\end{align}

Consequently, if $x\geq z$, then by (\ref{eq7})
\begin{eqnarray*}
\begin{array}{rcl}
\|\xi(x)\|&=&\|\xi(x)-\big(\sum_{i=0}^{n}\mu_{y_{i}}(a_{i})\big)(x)+\big(\sum_{i=0}^{n}\mu_{y_{i}}(a_{i})\big)(x)\|\\
&=&\|\big(\xi-\sum_{i=0}^{n}\mu_{y_{i}}(a_{i})\big)(x)+\sum_{i=0}^{n}\big(\mu_{y_{i}}(a_{i})\big)(x)\|\\
&\leq&\|\big(\xi-\sum_{i=0}^{n}\mu_{y_{i}}(a_{i})\big)(x)\|+\|\sum_{i=0}^{n}\alpha_{x-y_{i}}(a_{i})\|\\
&\leq&\|\xi-\sum_{i=0}^{n}\mu_{y_{i}}(a_{i})\|+\|\sum_{i=0}^{n}\alpha_{x-y_{i}}(a_{i})\|<\varepsilon/2+\varepsilon/2=\varepsilon.
\end{array}
\end{eqnarray*}
This shows that the net $\{\|\xi(x)\|\}_{x\in\Gamma}$ converges to $0$. So $\ker \sigma$ is contained in the right hand side of (\ref{ker sigma}). To see the other inclusion,
let $\xi$ belong to the right hand side of (\ref{ker sigma}). We show that $\sigma(\xi)=0$. For every $\varepsilon>0$, there exists $s\in\Gamma$ such that
$\|\xi(x)\|<\varepsilon/3$ for every $x\geq s$, and a finite sum $\sum_{i=0}^{n}\mu_{y_{i}}(a_{i})$ such that $\|\xi-\sum_{i=0}^{n}\mu_{y_{i}}(a_{i})\|<\varepsilon/3$. Let $z=\textrm{max}\{s,y_{0},...,y_{n}\}$. Then we have
\begin{eqnarray}
\label{eq8}
\begin{array}{rcl}
\|\sigma(\xi)\|&=&\|\sigma(\xi-\sum_{i=0}^{n}\mu_{y_{i}}(a_{i}))+\sigma(\sum_{i=0}^{n}\mu_{y_{i}}(a_{i}))\|\\
&\leq&\|\sigma(\xi-\sum_{i=0}^{n}\mu_{y_{i}}(a_{i}))\|+\|\sigma(\sum_{i=0}^{n}\mu_{y_{i}}(a_{i}))\|\\
&\leq&\|\xi-\sum_{i=0}^{n}\mu_{y_{i}}(a_{i})\|+\|\sum_{i=0}^{n}\alpha^{y_{i}}(a_{i})\|\\
&<&\varepsilon/3+\|\sum_{i=0}^{n}\alpha^{y_{i}}(a_{i})\|.
\end{array}
\end{eqnarray}
But in the bottom line, by the same computation as (\ref{eq12}), we have $\|\sum_{i=0}^{n}\alpha^{y_{i}}(a_{i})\|=\|\sum_{i=0}^{n}\alpha_{x-y_{i}}(a_{i})\|$ for every $x\geq z$. So it follows that
$$\|\sigma(\xi)\|<\varepsilon/3+\|\sum_{i=0}^{n}\alpha_{x-y_{i}}(a_{i})\|\ \ \textrm{for all}\ x\geq z.$$
Moreover, if $x\geq z$, then
\begin{eqnarray*}
\begin{array}{rcl}
\|\sum_{i=0}^{n}\alpha_{x-y_{i}}(a_{i})\|&=&\|\sum_{i=0}^{n}\big(\mu_{y_{i}}(a_{i})\big)(x)-\xi(x)+\xi(x)\|\\
&=&\|\big(\sum_{i=0}^{n}\mu_{y_{i}}(a_{i})\big)(x)-\xi(x)+\xi(x)\|\\
&=&\|\big(\sum_{i=0}^{n}\mu_{y_{i}}(a_{i})-\xi\big)(x)+\xi(x)\|\\
&\leq&\|\big(\sum_{i=0}^{n}\mu_{y_{i}}(a_{i})-\xi\big)(x)\|+\|\xi(x)\|\\
&\leq&\|\sum_{i=0}^{n}\mu_{y_{i}}(a_{i})-\xi\|+\|\xi(x)\|<\varepsilon/3+\varepsilon/3=2\varepsilon/3.\\
\end{array}
\end{eqnarray*}
Thus, if $x\geq z$, then
$$\|\sigma(\xi)\|<\varepsilon/3+\|\sum_{i=0}^{n}\alpha_{x-y_{i}}(a_{i})\|\leq\varepsilon/3+2\varepsilon/3=\varepsilon,$$
which means $\|\sigma(\xi)\|<\varepsilon$ for every $\varepsilon>0$. So we must have $\|\sigma(\xi)\|=0$, and therefore $\sigma(\xi)=0$. This completes the proof.
\end{proof}

There is an action $\beta$ of $\Gamma$ by automorphisms on $\B$ induced by the shift on $\ell^{\infty}(\Gamma,A)$, such that $\beta_{s}\circ\mu_{x}=\mu_{x+s}$ for all $s,x\in\Gamma$. Thus we obtain a dynamical system $(\B,\Gamma,\beta)$. Define a map $\rho:\B\rightarrow \L(\ell^{2}(\Gamma,A))$ by $(\rho(\xi)f)(x)=\xi(x)f(x)$, and $U:\Gamma\rightarrow \L(\ell^{2}(\Gamma,A))$ by $(U_{y}f)(x)=f(x-y)$, where $\xi\in\B$ and $f\in\ell^{2}(\Gamma,A)$. Then $\rho$
is a nondegenerate representation, and $U$ is a unitary representation such that we have $\rho(\beta_{s}(\xi))=U_{s}\rho(\xi)U_{s}^{*}$. Therefore the pair $(\rho,U)$ is a covariant representation of $(\B,\Gamma,\beta)$ on $\ell^{2}(\Gamma,A)$.
Moreover, let $(\B\times_{\beta}\Gamma,j_{\B},j_{\Gamma})$ be the group crossed product associated to the system $(\B,\Gamma,\beta)$. Since $\I$ is a $\beta$-invariant essential ideal of $\B$, $\I\times_{\beta}\Gamma$ sits in $\B\times_{\beta}\Gamma$ as an essential ideal \cite[Proposition 2.4]{Kusuda}.

\section{The partial-isometric crossed product $A\times_{\alpha}^{\piso}\Gamma^{+}$ as a full corner of $\B\times_{\beta}\Gamma$}
\label{sec:full piso}

\begin{theorem}
\label{main}
Suppose that $(A,\Gamma^{+},\alpha)$ is a dynamical system consisting of a $C^{*}$-algebra $A$ and an action $\alpha$ of $\Gamma^{+}$ by extendible endomorphisms of $A$. Let $q=\overline{j_{\B}\circ\mu_{0}}(1)$, and
$$\pi_{A}:A\rightarrow q(\B\times_{\beta}\Gamma)q\ \ \textrm{and}\ \ W:\Gamma^{+}\rightarrow \mathcal{M}(q(\B\times_{\beta}\Gamma)q)$$
be the maps defined by $\pi_{A}(a)=(j_{\B}\circ\mu_{0})(a)$ and $W_{x}=qj_{\Gamma}(x)^{*}q$ for all $a\in A$ and $x\in\Gamma^{+}$. Then the pair $(\pi_{A},W)$ is a partial-isometric covariant representation of $(A,\Gamma^{+},\alpha)$, and
the associated homomorphism $\Psi:=\pi_{A}\times W$ is an isomorphism of $(A\times_{\alpha}^{\piso}\Gamma^{+},i_{A},v)$ onto $q(\B\times_{\beta}\Gamma)q$, such that $\Psi(i_{A}(a))=\pi_{A}(a)$ and $\overline{\Psi}(v_{x})=W_{x}$. Moreover, $A\times_{\alpha}^{\piso}\Gamma^{+}$ is a full corner in $\B\times_{\beta}\Gamma$.
\end{theorem}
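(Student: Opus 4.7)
The plan is to verify the conclusion through four successive steps: (a) $(\pi_A,W)$ is a partial-isometric covariant representation, so that $\Psi=\pi_A\times W$ exists by the universal property; (b) $\Psi$ is injective; (c) the image of $\Psi$ exhausts $q(\B\times_\beta\Gamma)q$; (d) $q$ is full. Throughout, the main computational tool is the group covariance $j_\Gamma(s)\overline{j_\B}(m)=\overline{j_\B}(\overline{\beta}_s(m))j_\Gamma(s)$ for $m\in\M(\B)$, combined with $\overline{\beta}_s\circ\overline{\mu_0}=\overline{\mu_s}$, which gives $j_\Gamma(s)qj_\Gamma(s)^*=\overline{j_\B}(p_s)$ where $p_s:=\overline{\mu_s}(1)=\gamma_s(1)$.

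For (a), I would manipulate $W_xW_x^*=qj_\Gamma(x)^*qj_\Gamma(x)q$ into $\overline{j_\B}(p_0p_{-x}p_0)$; a pointwise calculation shows $p_0p_{-x}=\overline{\mu_0}(\overline{\alpha}_x(1))$, yielding $W_xW_x^*=\overline{\pi_A}(\overline{\alpha}_x(1))$. The semigroup law $W_{x+y}=W_xW_y$ reduces after rewriting to $\overline{j_\B}(p_0p_{-(x+y)}(1-p_{-x}))j_\Gamma(x+y)^*=0$, which holds pointwise by the multiplier identity $\overline{\alpha}_x(1)\overline{\alpha}_{x+y}(1)=\overline{\alpha}_{x+y}(1)$ (valid for $x,y\in\Gamma^+$). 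The intertwining $\pi_A(\alpha_x(a))W_x=W_x\pi_A(a)$ follows from the pointwise identity $p_0\mu_{-x}(a)=\mu_0(\alpha_x(a))$ for $x\in\Gamma^+$, plus an elementary absorption to cancel a trailing $q$. Nondegeneracy of $\pi_A$ into $q(\B\times_\beta\Gamma)q$ is automatic: for an approximate identity $(e_i)$ of $A$, $\pi_A(e_i)=j_\B(\mu_0(e_i))\to q$ strictly by the extendibility established in Lemma \ref{mux}.

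For (b), I appeal to \cite[Theorem 4.8]{LR}: $\Psi$ is faithful iff $\pi_A$ is faithful on the range of $q-W_s^*W_s$ for each $s>0$. The map $\pi_A=j_\B\circ\mu_0$ is itself injective, and a pointwise calculation gives $q-W_s^*W_s=\overline{j_\B}(p_0(1-p_s))$ with $(p_0(1-p_s))(r)=\overline{\alpha}_r(1)$ for $0\le r<s$ and $0$ otherwise. Thus $\pi_A(a)(q-W_s^*W_s)=0$ forces $\alpha_r(a)=0$ on $[0,s)$, in particular $a=\alpha_0(a)=0$. For (c), a direct manipulation produces $W_x^*\pi_A(a)W_y=qj_\B(\mu_x(a))j_\Gamma(x-y)q$, so the image of $\Psi$ is $\clsp\{qj_\B(\mu_x(a))j_\Gamma(x-y)q:x,y\in\Gamma^+,a\in A\}$. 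To match the generic spanning element $qj_\B(\mu_z(b))j_\Gamma(s)q$ of $q(\B\times_\beta\Gamma)q$ I split into three cases: (i) if $z\ge\max(0,s)$, take $x=z$, $y=z-s$, $a=b$; (ii) if $0\le z<s$, take $x=s$, $y=0$, $a=\alpha_{s-z}(b)$, justified by $p_0\mu_z(b)p_s=\mu_s(\overline{\alpha}_s(1)\alpha_{s-z}(b))=p_0\mu_s(\alpha_{s-z}(b))p_s$; (iii) if $z<0$, then $p_0\mu_z(b)=\mu_0(\alpha_{-z}(b))$ so that $qj_\B(\mu_z(b))=\pi_A(\alpha_{-z}(b))$, after which either the adjoint covariance $\pi_A(c)W_s^*=W_s^*\pi_A(\alpha_s(c))$ (when $s\ge 0$) or the identification $qj_\Gamma(s)q=W_{-s}$ (when $s<0$) places the element in the image.

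Finally, step (d) requires $\overline{(\B\times_\beta\Gamma)q(\B\times_\beta\Gamma)}=\B\times_\beta\Gamma$. A spanning element $j_\B(\mu_z(a))j_\Gamma(s)$ of the right-hand side equals $j_\Gamma(z)\pi_A(a)j_\Gamma(s-z)$ since $\mu_z=\beta_z\circ\mu_0$; because $\pi_A(a)=\pi_A(a)q$ and $j_\Gamma(s-z)e_i\to j_\Gamma(s-z)$ strictly for an approximate identity $(e_i)$ of $\B\times_\beta\Gamma$, the element is the norm limit of $[j_\Gamma(z)\pi_A(a)]\,q\,[j_\Gamma(s-z)e_i]$, both bracketed factors lying in $\B\times_\beta\Gamma$. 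I expect the main obstacle to be the case analysis in (c): the projection $p_0$ truncates $\mu_z(b)$ in a way that depends on the sign of $z$, and reconciling $p_0\mu_z(b)p_s$ with some $p_0\mu_x(a)p_s$ (with $x\in\Gamma^+$) requires the right choice of $x$ and $a$ together with careful bookkeeping of identities such as $\overline{\alpha}_r(1)\alpha_{r-z}(b)=\alpha_{r-z}(\overline{\alpha}_z(1)b)$ in their various extendible forms.
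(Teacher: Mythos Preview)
Your proposal is correct and follows essentially the same four-step architecture as the paper: verify covariance to obtain $\Psi$ via the universal property, invoke \cite[Theorem 4.8]{LR} for injectivity, handle surjectivity onto $q(\B\times_\beta\Gamma)q$ by a case analysis on spanning elements, and establish fullness via an approximate-identity argument.

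The one substantive difference is in step (b). You compute $\pi_A(a)(q-W_s^*W_s)=j_\B\big(\mu_0(a)-\mu_s(\alpha_s(a))\big)$ (equivalently $j_\B$ of the function that is $\alpha_r(a)$ on $[0,s)$ and $0$ elsewhere) and then conclude $a=0$ by evaluating pointwise; this step implicitly uses that $j_\B$ is injective, which is stronger than the injectivity of $\pi_A=j_\B\circ\mu_0$ that you state. The paper handles the same implication by pushing through the concrete covariant representation $(\rho,U)$ on $\ell^2(\Gamma,A)$ and evaluating at $\varepsilon_0\otimes a^*$. Both routes work---injectivity of $j_\B$ is standard for discrete-group crossed products---but you should make that reliance explicit. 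Apart from this, your case split in (c) (including the $z<0$ reduction via $p_0\mu_z(b)=\mu_0(\alpha_{-z}(b))$) and your fullness argument in (d) are minor reorganisations of the paper's computations rather than genuinely different arguments.
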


\begin{proof}
Firstly $\pi_{A}$ is nondegenerate. This is because for an approximate identity $\{a_{i}\}$ in $A$, since $\mu_{0}(a_{i})\rightarrow\overline{\mu_{0}}(1)$ in $\M(\B)$ strictly, and $j_{\B}$ is nondegenerate, we have
$\pi_{A}(a_{i})\rightarrow\overline{j_{\B}}(\overline{\mu_{0}}(1))$ in $\mathcal{M}(\B\times_{\beta}\Gamma)$ strictly, where $\overline{j_{\B}}(\overline{\mu_{0}}(1))=\overline{j_{\B}\circ\mu_{0}}(1)=q$. Next we show that $W$ is a partial-isometric representation.
For each $x\in\Gamma^{+}$, by using the covariance equation of the pair $(j_{\B},j_{\Gamma})$, we have
\begin{eqnarray}\label{eq10}
\begin{array}{rcl}
W_{x}W_{x}^{*}W_{x}&=&qj_{\Gamma}(x)^{*}qj_{\Gamma}(x)\overline{j_{\B}}(\overline{\mu_{0}}(1))j_{\Gamma}(x)^{*}q\\
&=&qj_{\Gamma}(x)^{*}q\overline{j_{\B}}(\overline{\beta}_{x}(\overline{\mu_{0}}(1)))q\\
&=&qj_{\Gamma}(x)^{*}q\overline{j_{\B}}(\overline{\mu_{x}}(1))q\\
&=&qj_{\Gamma}(x)^{*}\overline{j_{\B}}(\overline{\mu_{0}}(1))\overline{j_{\B}}(\overline{\mu_{x}}(1))q\\
&=&qj_{\Gamma}(x)^{*}\overline{j_{\B}}(\overline{\mu_{0}}(1)\overline{\mu_{x}}(1))q=qj_{\Gamma}(x)^{*}\overline{j_{\B}}(\overline{\mu_{x}}(\overline{\alpha}_{x}(1)))q.\\
\end{array}
\end{eqnarray}
To continue, for the last term, again we apply the covariance equation of $(j_{\B},j_{\Gamma})$ to obtain
\begin{eqnarray}\label{eq11}
\begin{array}{rcl}
W_{x}W_{x}^{*}W_{x}&=&q\overline{j_{\B}}(\overline{\mu_{0}}(\overline{\alpha}_{x}(1)))j_{\Gamma}(x)^{*}q\\
&=&q\overline{j_{\B}}(\overline{\mu_{0}}(1)\overline{\mu_{-x}}(1))j_{\Gamma}(x)^{*}q\ \ \big[\overline{\mu_{0}}(\overline{\alpha}_{x}(1))=\overline{\mu_{0}}(1)\overline{\mu_{-x}}(1)\big]\\
&=&q\overline{j_{\B}}(\overline{\mu_{0}}(1))\overline{j_{\B}}(\overline{\mu_{-x}}(1))j_{\Gamma}(x)^{*}q\\
&=&q\overline{j_{\B}}(\overline{\mu_{-x}}(1))j_{\Gamma}(x)^{*}q\\
&=&qj_{\Gamma}(x)^{*}\overline{j_{\B}}(\overline{\beta}_{x}(\overline{\mu_{-x}}(1)))q\\
&=&qj_{\Gamma}(x)^{*}\overline{j_{\B}}(\overline{\mu_{0}}(1))q=qj_{\Gamma}(x)^{*}q=W_{x}.\\
\end{array}
\end{eqnarray}
So each $W_{x}$ is a partial-isometry. Also for every $x,y\in\Gamma^{+}$, again by the covariance equation of $(j_{\B},j_{\Gamma})$,
\begin{eqnarray*}
\begin{array}{rcl}
W_{x}W_{y}&=&qj_{\Gamma}(x)^{*}\overline{j_{\B}}(\overline{\mu_{0}}(1))j_{\Gamma}(y)^{*}q\\
&=&qj_{\Gamma}(x)^{*}j_{\Gamma}(y)^{*}\overline{j_{\B}}(\overline{\beta}_{y}(\overline{\mu_{0}}(1)))q\\
&=&qj_{\Gamma}(x+y)^{*}\overline{j_{\B}}(\overline{\mu_{y}}(1))q\\
&=&q\overline{j_{\B}}(\overline{\mu_{0}}(1))j_{\Gamma}(x+y)^{*}\overline{j_{\B}}(\overline{\mu_{y}}(1))q\\
&=&qj_{\Gamma}(x+y)^{*}\overline{j_{\B}}(\overline{\beta}_{x+y}(\overline{\mu_{0}}(1)))\overline{j_{\B}}(\overline{\mu_{y}}(1))q\\
&=&qj_{\Gamma}(x+y)^{*}\overline{j_{\B}}(\overline{\mu_{x+y}}(1))\overline{j_{\B}}(\overline{\mu_{y}}(1))q\\
&=&qj_{\Gamma}(x+y)^{*}\overline{j_{\B}}(\overline{\mu_{x+y}}(1)\overline{\mu_{y}}(1))q\\
&=&qj_{\Gamma}(x+y)^{*}\overline{j_{\B}}(\overline{\mu_{x+y}}(\overline{\alpha}_{x}(1)))\overline{j_{\B}}(\overline{\mu_{0}}(1))q\ \ \big[\overline{\mu_{x+y}}(1)\overline{\mu_{y}}(1)=\overline{\mu_{x+y}}(\overline{\alpha}_{x}(1))\big]\\
&=&qj_{\Gamma}(x+y)^{*}\overline{j_{\B}}(\overline{\mu_{x+y}}(\overline{\alpha}_{x}(1))\overline{\mu_{0}}(1))q\\
&=&qj_{\Gamma}(x+y)^{*}\overline{j_{\B}}(\overline{\mu_{x+y}}(\overline{\alpha}_{x}(1)\overline{\alpha}_{x+y}(1)))q\ \ \big[\overline{\mu_{x+y}}(\overline{\alpha}_{x}(1))\overline{\mu_{0}}(1)=\overline{\mu_{x+y}}(\overline{\alpha}_{x}(1))\overline{\mu_{x+y}}(\overline{\alpha}_{x+y}(1))\big]\\
&=&qj_{\Gamma}(x+y)^{*}\overline{j_{\B}}(\overline{\mu_{x+y}}(\overline{\alpha}_{x+y}(1)))q.\ \ \big[\overline{\alpha}_{x}(1)\overline{\alpha}_{x+y}(1)=\overline{\alpha}_{x}(1\overline{\alpha}_{y}(1))=\overline{\alpha}_{x+y}(1)\big]\\
\end{array}
\end{eqnarray*}
Then for the bottom line, by a similar computation to (\ref{eq11}) (regarding the proof of $W_{x}W_{x}^{*}W_{x}=W_{x}$), where we showed $qj_{\Gamma}(x)^{*}\overline{j_{\B}}(\overline{\mu_{x}}(\overline{\alpha}_{x}(1)))q=W_{x}$, we get
$$W_{x}W_{y}=qj_{\Gamma}(x+y)^{*}\overline{j_{\B}}(\overline{\mu_{x+y}}(\overline{\alpha}_{x+y}(1)))q=W_{x+y}.$$
Thus $W$ is a partial-isometric representation of $\Gamma^{+}$ in $\mathcal{M}(q(\B\times_{\beta}\Gamma)q)\simeq q\mathcal{M}(\B\times_{\beta}\Gamma)q$.

Now we show that the pair $(\pi_{A},W)$ satisfies the partial-isometric covariance equations. For every $a\in A$ and $x\in\Gamma^{+}$,
\begin{eqnarray*}
\begin{array}{rcl}
W_{x}\pi_{A}(a)W_{x}^{*}&=&qj_{\Gamma}(x)^{*}j_{\B}(\mu_{0}(a))j_{\Gamma}(x)q\\
&=&qj_{\B}(\beta_{-x}(\mu_{0}(a)))q\ \ [\textrm{by the covariance equation of}\ (j_{\B},j_{\Gamma})]\\
&=&qj_{\B}(\mu_{-x}(a))q\\
&=&q\overline{j_{\B}}(\overline{\mu_{0}}(1))j_{\B}(\mu_{-x}(a))q\\
&=&qj_{\B}(\overline{\mu_{0}}(1)\mu_{-x}(a))q\\
&=&qj_{\B}(\mu_{0}(\alpha_{x}(a)))q\ \ [\overline{\mu_{0}}(1)\mu_{-x}(a)=\overline{\mu_{0}}(1)\mu_{0}(\alpha_{x}(a))=\mu_{0}(\alpha_{x}(a))]\\
&=&(j_{\B}\circ\mu_{0})(\alpha_{x}(a))=\pi_{A}(\alpha_{x}(a)).\\
\end{array}
\end{eqnarray*}
Therefore $\pi_{A}(\alpha_{x}(a))=W_{x}\pi_{A}(a)W_{x}^{*}$, and $W_{x}^{*}W_{x}\pi_{A}(a)=\pi_{A}(a)W_{x}^{*}W_{x}$ follows by some similar computation which we skip it here. So, the pair $(\pi_{A},W)$ is a partial-isometric
covariant representation of $(A,\Gamma^{+},\alpha)$ in $\mathcal{M}(q(\B\times_{\beta}\Gamma)q)$, and hence there is a nondegenerate homomorphism
$\Psi:=\pi_{A}\times W:(A\times_{\alpha}^{\piso}\Gamma^{+},i_{A},v)\rightarrow \mathcal{M}(q(\B\times_{\beta}\Gamma)q)$ such that $\Psi(i_{A}(a))=\pi_{A}(a)$ and $\Psi(v_{x})=W_{x}$ for all $a\in A$ and $x\in\Gamma^{+}$. We claim that $\Psi$ is an isomorphism
of $A\times_{\alpha}^{\piso}\Gamma^{+}$ onto $q(\B\times_{\beta}\Gamma)q$. To see that $\Psi$ is injective, we apply \cite[Theorem 4.8]{LR}. So, assume that $\pi_{A}(a)(q-W_{x}^{*}W_{x})=0$, where $a\in A$ and $x>0$ in $\Gamma^{+}$. We have
\begin{eqnarray}
\label{eq1}
\begin{array}{rcl}
\pi_{A}(a)(q-W_{x}^{*}W_{x})&=&j_{\B}(\mu_{0}(a))[q-qj_{\Gamma}(x)\overline{j_{\B}}(\overline{\mu_{0}}(1))j_{\Gamma}(x)^{*}q]\\
&=&j_{\B}(\mu_{0}(a))[q-q\overline{j_{\B}}(\overline{\beta}_{x}(\overline{\mu_{0}}(1)))q]\\
&=&j_{\B}(\mu_{0}(a))[q-q\overline{j_{\B}}(\overline{\mu_{x}}(1))\overline{j_{\B}}(\overline{\mu_{0}}(1))]\\
&=&j_{\B}(\mu_{0}(a))[q-q\overline{j_{\B}}(\overline{\mu_{x}}(1)\overline{\mu_{0}}(1))]\\
&=&j_{\B}(\mu_{0}(a))[q-q\overline{j_{\B}}(\overline{\mu_{x}}(\overline{\alpha}_{x}(1)))]\ \big[\overline{\mu_{x}}(1)\overline{\mu_{0}}(1)=\overline{\mu_{x}}(1)\overline{\mu_{x}}(\overline{\alpha_{x}}(1))\big]\\
&=&j_{\B}(\mu_{0}(a))q-j_{\B}(\mu_{0}(a))q\overline{j_{\B}}(\overline{\mu_{x}}(\overline{\alpha}_{x}(1)))\\
&=&j_{\B}(\mu_{0}(a))-j_{\B}(\mu_{0}(a)\overline{\mu_{x}}(\overline{\alpha}_{x}(1)))\\
&=&j_{\B}(\mu_{0}(a))-j_{\B}(\overline{\mu_{x}}(\alpha_{x}(a)\overline{\alpha}_{x}(1)))\ \big[\mu_{0}(a)\overline{\mu_{x}}(\overline{\alpha}_{x}(1))=\mu_{x}(\alpha_{x}(a))\overline{\mu_{x}}(\overline{\alpha}_{x}(1))\big]\\
&=&j_{\B}(\mu_{0}(a))-j_{\B}(\mu_{x}(\alpha_{x}(a)))=j_{\B}(\mu_{0}(a)-\mu_{x}(\alpha_{x}(a))).\\
\end{array}
\end{eqnarray}
Therefore $\pi_{A}(a)(q-W_{x}^{*}W_{x})=0$ implies that $j_{\B}(\mu_{0}(a)-\mu_{x}(\alpha_{x}(a)))=0$. Now if $\rho\times U$ is the nondegenerate representation of $\B\times_{\beta}\Gamma$ corresponding to the covariant pair $(\rho,U)$ of
$(\B,\Gamma,\beta)$ in $\L(\ell^{2}(\Gamma,A))$ (see \S\ref{sec:alg B}), then
$j_{\B}(\mu_{0}(a)-\mu_{x}(\alpha_{x}(a)))=0$ gives us
$$\rho\times U[j_{\B}(\mu_{0}(a)-\mu_{x}(\alpha_{x}(a)))]=\rho(\mu_{0}(a)-\mu_{x}(\alpha_{x}(a)))=0$$
in $\L(\ell^{2}(\Gamma,A))$. So it follows that, for $\varepsilon_{0}\otimes a^{*}\in\ell^{2}(\Gamma)\otimes A\simeq\ell^{2}(\Gamma,A)$, we must have $$\rho(\mu_{0}(a)-\mu_{x}(\alpha_{x}(a)))(\varepsilon_{0}\otimes a^{*})=0.$$ But since $x>0$,
we get $\varepsilon_{0}\otimes aa^{*}=0$. So $aa^{*}=0$ which implies that $a=0$. Therefore by \cite[Theorem 4.8]{LR} $\Psi$ is injective.

To see that $\Psi(A\times_{\alpha}^{\piso}\Gamma^{+})=q(\B\times_{\beta}\Gamma)q$, first note that for $a\in A$ and $x,y\in\Gamma^{+}$, we have
\begin{eqnarray*}
\begin{array}{rcl}
\Psi(v_{x}^{*}i_{A}(a)v_{y})&=&W_{x}^{*}\pi_{A}(a)W_{y}\\
&=&qj_{\Gamma}(x)(j_{\B}\circ\mu_{0})(a)j_{\Gamma}(y)^{*}q\\
&=&qj_{\Gamma}(x)j_{\B}(\mu_{0}(a))j_{\Gamma}(y)^{*}q,
\end{array}
\end{eqnarray*}
which belongs to $q(\B\times_{\beta}\Gamma)q$. Thus $\Psi(A\times_{\alpha}^{\piso}\Gamma^{+})\subset q(\B\times_{\beta}\Gamma)q$. To see the other inclusion, as $\B\times_{\beta}\Gamma$ is spanned by elements of the form $j_{\B}(\mu_{r}(a))j_{\Gamma}(s)$,
where $a\in A$ and $r,s\in\Gamma$, it is enough to show that $\Psi(A\times_{\alpha}^{\piso}\Gamma^{+})$ contains each spanning element $qj_{\B}(\mu_{r}(a))j_{\Gamma}(s)q$ of $q(\B\times_{\beta}\Gamma)q$.
Without loss of generality, we can assume that $r\geq0$. So, if $0\leq r<s$, then by the covariance equation of $(j_{\B},j_{\Gamma})$, we get
\begin{eqnarray*}
\begin{array}{rcl}
qj_{\B}(\mu_{r}(a))j_{\Gamma}(s)q&=&qj_{\Gamma}(s)j_{\B}(\beta_{-s}(\mu_{r}(a)))q\\
&=&qj_{\Gamma}(s)j_{\B}(\mu_{r-s}(a))\overline{j_{\B}}(\overline{\mu_{0}}(1))q\\
&=&qj_{\Gamma}(s)j_{\B}(\mu_{r-s}(a)\overline{\mu_{0}}(1))q\ \ (r-s<0)\\
&=&qj_{\Gamma}(s)j_{\B}(\mu_{0}(\alpha_{s-r}(a))q\ \ (s-r>0)\\
&=&qj_{\Gamma}(s)(j_{\B}\circ\mu_{0})(\alpha_{s-r}(a))j_{\Gamma}(0)^{*}q\\
&=&W_{s}^{*}\pi_{A}(\alpha_{s-r}(a))W_{0}\\
&=&\Psi(v_{s}^{*}i_{A}(\alpha_{s-r}(a))v_{0})\in\Psi(A\times_{\alpha}^{\piso}\Gamma^{+}).\\
\end{array}
\end{eqnarray*}
If $r\geq s$, then
\begin{eqnarray*}
\begin{array}{rcl}
qj_{\B}(\mu_{r}(a))j_{\Gamma}(s)q&=&qj_{\B}(\beta_{r}(\mu_{0}(a)))j_{\Gamma}(s)q\\
&=&qj_{\Gamma}(r)j_{\B}(\mu_{0}(a))j_{\Gamma}(r)^{*}j_{\Gamma}(s)q\\
&=&qj_{\Gamma}(r)j_{\B}(\mu_{0}(a))j_{\Gamma}(r-s)^{*}j_{\Gamma}(s)^{*}j_{\Gamma}(s)q\ \ (r-s\geq 0)\\
&=&qj_{\Gamma}(r)(j_{\B}\circ\mu_{0})(a)j_{\Gamma}(r-s)^{*}q\\
&=&W_{r}^{*}\pi_{A}(a)W_{r-s}\\
&=&\Psi(v_{r}^{*}i_{A}(a)v_{r-s})\in\Psi(A\times_{\alpha}^{\piso}\Gamma^{+}).\\
\end{array}
\end{eqnarray*}
Thus it follows that $q(\B\times_{\beta}\Gamma)q\subset \Psi(A\times_{\alpha}^{\piso}\Gamma^{+})$, and therefore $\Psi(A\times_{\alpha}^{\piso}\Gamma^{+})=q(\B\times_{\beta}\Gamma)q$. Consequently $\Psi$ is an isomorphism of $A\times_{\alpha}^{\piso}\Gamma^{+}$ onto $q(\B\times_{\beta}\Gamma)q$.

Finally, to see that $A\times_{\alpha}^{\piso}\Gamma^{+}$ is a full corner in $\B\times_{\beta}\Gamma$, we have to show that $(\B\times_{\beta}\Gamma)q(\B\times_{\beta}\Gamma)$ is dense in $\B\times_{\beta}\Gamma$. If $\{a_{i}\}$ is an approximate identity in $A$, then for any spanning element
$j_{\B}(\mu_{r}(a))j_{\Gamma}(s)$ of $\B\times_{\beta}\Gamma$, we have $j_{\B}(\mu_{r}(a_{i}a))j_{\Gamma}(s)\rightarrow j_{\B}(\mu_{r}(a))j_{\Gamma}(s)$ in the norm topology of $\B\times_{\beta}\Gamma$. But
\begin{eqnarray*}
\begin{array}{rcl}
j_{\B}(\mu_{r}(a_{i}a))j_{\Gamma}(s)&=&j_{\B}(\beta_{r}(\mu_{0}(a_{i}a)))j_{\Gamma}(s)\\
&=&j_{\Gamma}(r)j_{\B}(\mu_{0}(a_{i}a))j_{\Gamma}(r)^{*}j_{\Gamma}(s)\\
&=&j_{\Gamma}(r)j_{\B}(\mu_{0}(a_{i})\overline{\mu_{0}}(1)\mu_{0}(a))j_{\Gamma}(-r)j_{\Gamma}(s)\\
&=&j_{\Gamma}(r)j_{\B}(\mu_{0}(a_{i}))\overline{j_{\B}}(\overline{\mu_{0}}(1))j_{\B}(\mu_{0}(a))j_{\Gamma}(s-r)\\
&=&[j_{\Gamma}(r)j_{\B}(\mu_{0}(a_{i}))]q[j_{\B}(\mu_{0}(a))j_{\Gamma}(s-r)].\\
\end{array}
\end{eqnarray*}
The bottom line belongs to $(\B\times_{\beta}\Gamma)q(\B\times_{\beta}\Gamma)$, so does $j_{\B}(\mu_{r}(a_{i}a))j_{\Gamma}(s)$. Therefore we must have $j_{\B}(\mu_{r}(a))j_{\Gamma}(s)\in\overline{(\B\times_{\beta}\Gamma)q(\B\times_{\beta}\Gamma)}$, which implies that $\overline{(\B\times_{\beta}\Gamma)q(\B\times_{\beta}\Gamma)}=\B\times_{\beta}\Gamma$. So $A\times_{\alpha}^{\piso}\Gamma^{+}$ is a full corner in $\B\times_{\beta}\Gamma$. This completes the proof.
\end{proof}

\begin{lemma}
\label{ker}
The ideal $J$ of $A\times_{\alpha}^{\piso}\Gamma^{+}$ is isomorphic to $q(\I\times_{\beta}\Gamma)q$, which is a full corner in $\I\times_{\beta}\Gamma$.
\end{lemma}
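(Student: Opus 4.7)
Plan. I would leverage the isomorphism $\Psi:A\times_{\alpha}^{\piso}\Gamma^{+}\to q(\B\times_{\beta}\Gamma)q$ from Theorem~\ref{main} and transport everything to the ``$\B$-side.'' The forward inclusion $\Psi(J)\subseteq q(\I\times_{\beta}\Gamma)q$ is almost free from equation~(\ref{eq1}) in the proof of Theorem~\ref{main}, which already computed
\[
\pi_{A}(a)\,(q-W_{t}^{*}W_{t})=j_{\B}(\mu_{0}(a)-\mu_{t}(\alpha_{t}(a))),
\]
whose right-hand side lies in $j_{\B}(\I)\subseteq\I\times_{\beta}\Gamma$ as soon as $t>0$; sandwiching by $W_{x}^{*}$ on the left and $W_{y}$ on the right preserves membership in both the ideal $\I\times_{\beta}\Gamma$ and the corner $q(\B\times_{\beta}\Gamma)q$, so every spanning element of $J$ maps into $q(\I\times_{\beta}\Gamma)q$.

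For the reverse inclusion I would take an arbitrary spanning element $q\,j_{\B}(\mu_{x}(a)-\mu_{y}(\alpha_{y-x}(a)))\,j_{\Gamma}(s)\,q$ of $q(\I\times_{\beta}\Gamma)q$, set $t=y-x>0$ and $\eta_{0}=\mu_{0}(a)-\mu_{t}(\alpha_{t}(a))$, and use the identity $\mu_{x}(a)-\mu_{y}(\alpha_{y-x}(a))=\beta_{x}(\eta_{0})$ together with $j_{\B}\circ\beta_{x}=\operatorname{Ad}(j_{\Gamma}(x))\circ j_{\B}$ to rewrite the element as $q\,j_{\Gamma}(x)\,j_{\B}(\eta_{0})\,j_{\Gamma}(s-x)\,q$. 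A short support computation---based on $\overline{\alpha}_{z}(1)\alpha_{z}(a)=\alpha_{z}(a)$ for $z\geq 0$---shows $\overline{\mu_{0}}(1)\eta_{0}=\eta_{0}=\eta_{0}\overline{\mu_{0}}(1)$, so that $j_{\B}(\eta_{0})=q\,j_{\B}(\eta_{0})\,q$ already sits in the corner and in fact, by~(\ref{eq1}), equals $\Psi(i_{A}(a)(1-v_{t}^{*}v_{t}))\in\Psi(J)$. Thus the spanning element factors as $[q\,j_{\Gamma}(x)\,q]\cdot j_{\B}(\eta_{0})\cdot[q\,j_{\Gamma}(s-x)\,q]$: the outer factors are multipliers of $q(\B\times_{\beta}\Gamma)q$ corresponding under $\Psi$ to multipliers of $A\times_{\alpha}^{\piso}\Gamma^{+}$, and since $J$ is an ideal, multiplying the middle element of $J$ by these multipliers stays in $\Psi(J)$. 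This completes $\Psi(J)=q(\I\times_{\beta}\Gamma)q$, and hence $J\cong q(\I\times_{\beta}\Gamma)q$.

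Finally, fullness of $q(\I\times_{\beta}\Gamma)q$ in $\I\times_{\beta}\Gamma$ follows from the general principle that a full projection in a $C^{*}$-algebra stays full inside every ideal. Concretely, given $\eta\in\I\times_{\beta}\Gamma$ and an approximate identity $\{e_{\lambda}\}$ for $\I\times_{\beta}\Gamma$, I would approximate $\eta$ inside $\B\times_{\beta}\Gamma$ by finite sums $\sum c_{i}\,q\,d_{i}$ (available because Theorem~\ref{main} already yields $\overline{(\B\times_{\beta}\Gamma)\,q\,(\B\times_{\beta}\Gamma)}=\B\times_{\beta}\Gamma$); then $e_{\lambda}\eta e_{\mu}$ is approximated by $\sum(e_{\lambda}c_{i})\,q\,(d_{i}e_{\mu})$, whose outer factors now lie in the ideal $\I\times_{\beta}\Gamma$, and letting $\lambda,\mu\to\infty$ places $\eta$ inside $\overline{(\I\times_{\beta}\Gamma)\,q\,(\I\times_{\beta}\Gamma)}$. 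The one mildly delicate ingredient I anticipate is the support identity $\overline{\mu_{0}}(1)\eta_{0}=\eta_{0}$, which rests on extendibility of $\alpha_{t}$; granted that, the whole argument is essentially bookkeeping on top of~(\ref{eq1}) and the covariance of $(j_{\B},j_{\Gamma})$.
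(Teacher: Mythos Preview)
Your proposal is correct, and for the forward inclusion $\Psi(J)\subseteq q(\I\times_{\beta}\Gamma)q$ it matches the paper exactly. The remaining two steps, however, follow a genuinely different route from the paper.

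For the reverse inclusion the paper does an explicit case analysis: it normalizes to $0\leq r<s$ and then treats the three cases $0\leq r<s\leq z$, $0\leq r<z<s$, and $z\leq r$ separately, in each case rewriting the spanning element of $q(\I\times_{\beta}\Gamma)q$ by hand (via the covariance of $(j_{\B},j_{\Gamma})$ and the absorption identities for $\overline{\mu_{0}}(1)$) until it visibly has the form $W_{u}^{*}\pi_{A}(b)(q-W_{t}^{*}W_{t})W_{v}$ for suitable $u,v\in\Gamma^{+}$. Your argument avoids cases entirely: you pull the $\beta_{x}$ out to reduce to $\eta_{0}=\mu_{0}(a)-\mu_{t}(\alpha_{t}(a))$, note that $\overline{\mu_{0}}(1)\eta_{0}=\eta_{0}=\eta_{0}\overline{\mu_{0}}(1)$ so that $j_{\B}(\eta_{0})$ already lies in the corner and equals an element of $\Psi(J)$, and then absorb the remaining unitaries as multipliers $qj_{\Gamma}(x)q$ and $qj_{\Gamma}(s-x)q$ of the corner, invoking that $\Psi(J)$ is an ideal there. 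This is shorter and more conceptual; the paper's approach, by contrast, yields explicit preimages in $J$ for each spanning element, which is occasionally useful downstream.

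For fullness the paper again works explicitly: it factors $\mu_{0}(aa_{i})-\mu_{s-r}(\alpha_{s-r}(aa_{i}))$ via equation~(\ref{eq3}) to insert a $q$ between two elements of $\I\times_{\beta}\Gamma$. Your argument instead appeals to the general fact that a projection full in $\B\times_{\beta}\Gamma$ remains full in any ideal, proved with an approximate identity of the ideal. Both are standard; yours is quicker but uses Theorem~\ref{main}'s fullness as a black box, whereas the paper's computation is self-contained.
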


\begin{proof}
We show that the isomorphism $\Psi$ in Theorem \ref{main} maps the ideal $J$ onto $q(\I\times_{\beta}\Gamma)q$. But first note that as $\I\times_{\beta}\Gamma$ sits in $\B\times_{\beta}\Gamma$ as an essential ideal, $\B\times_{\beta}\Gamma$ and
therefore $\M(\B\times_{\beta}\Gamma)$ are embedded in $\M(\I\times_{\beta}\Gamma)$ as $C^{*}$-subalgebras. Thus $q\in \M(\I\times_{\beta}\Gamma)$. Also $\I\times_{\beta}\Gamma$ is spanned by elements of the form
$j_{\B}(\mu_{r}(a)-\mu_{s}(\alpha_{s-r}(a)))j_{\Gamma}(z)$, where $a\in A$ and $r,s,z\in\Gamma$ such that $r<s$. Now to see that $\Psi(J)=q(\I\times_{\beta}\Gamma)q$, if $a\in A$ and $x,y,t\in\Gamma^{+}$, then
$$\Psi(v_{x}^{*}i_{A}(a)(1-v_{t}^{*}v_{t})v_{y})=W_{x}^{*}\pi_{A}(a)(q-W_{t}^{*}W_{t})W_{y}.$$ By the same calculation as (\ref{eq1}), we have $\pi_{A}(a)(q-W_{t}^{*}W_{t})=j_{\B}(\mu_{0}(a)-\mu_{t}(\alpha_{t}(a)))$, and therefore we obtain
\begin{align}
\label{eq2}
W_{x}^{*}\pi_{A}(a)(q-W_{t}^{*}W_{t})W_{y}=q[j_{\Gamma}(x)j_{\B}(\mu_{0}(a)-\mu_{t}(\alpha_{t}(a)))j_{\Gamma}(y)^{*}]q,
\end{align}
which belongs to $q(\I\times_{\beta}\Gamma)q$. So it follows that $\Psi(J)\subset q(\I\times_{\beta}\Gamma)q$. For the other inclusion, we show that each spanning element
$q[j_{\B}(\mu_{r}(a)-\mu_{s}(\alpha_{s-r}(a)))j_{\Gamma}(z)]q$ of $q(\I\times_{\beta}\Gamma)q$ belongs to $\Psi(J)$. To do this, we can assume that $0\leq r<s$ without loss of generality. So, if $0\leq r<s\leq z$,
then by the covariance equation of $(j_{\B},j_{\Gamma})$, we have
\begin{eqnarray*}
\begin{array}{l}
q[j_{\B}(\mu_{r}(a)-\mu_{s}(\alpha_{s-r}(a)))j_{\Gamma}(z)]q\\
=q[j_{\Gamma}(z)(j_{\B}\circ\beta_{-z})\big(\mu_{r}(a)-\mu_{s}(\alpha_{s-r}(a))\big)]q\\
=q[j_{\Gamma}(z)j_{\B}\big(\mu_{r-z}(a)-\mu_{s-z}(\alpha_{s-r}(a))\big)\overline{j_{\B}}(\overline{\mu_{0}}(1))]q\\
=q[j_{\Gamma}(z)j_{\B}\big(\mu_{r-z}(a)\overline{\mu_{0}}(1)-\mu_{s-z}(\alpha_{s-r}(a))\overline{\mu_{0}}(1)\big)]q\\
=q[j_{\Gamma}(z)j_{\B}\big(\mu_{0}(\alpha_{z-r}(a))-\mu_{0}(\alpha_{z-s}(\alpha_{s-r}(a)))\big)]q\ \ (\textrm{because}\ r-z,s-z\leq0)\\
=q[j_{\Gamma}(z)j_{\B}\big(\mu_{0}(\alpha_{z-r}(a))-\mu_{0}(\alpha_{z-r}(a))\big)]q=0\in\Psi(J).
\end{array}
\end{eqnarray*}
If $0\leq r<z<s$, then
\begin{eqnarray*}
\begin{array}{l}
q[j_{\B}(\mu_{r}(a)-\mu_{s}(\alpha_{s-r}(a)))j_{\Gamma}(z)]q\\
=q[j_{\Gamma}(z)(j_{\B}\circ\beta_{-z})\big(\mu_{r}(a)-\mu_{s}(\alpha_{s-r}(a))\big)]q\\
=q[j_{\Gamma}(z)j_{\B}\big(\mu_{r-z}(a)-\mu_{s-z}(\alpha_{s-r}(a))\big)\overline{j_{\B}}(\overline{\mu_{0}}(1))]q\\
=q[j_{\Gamma}(z)j_{\B}\big(\mu_{r-z}(a)\overline{\mu_{0}}(1)-\mu_{s-z}(\alpha_{s-r}(a))\overline{\mu_{0}}(1)\big)]q\\
=q[j_{\Gamma}(z)j_{\B}\big(\mu_{0}(\alpha_{z-r}(a))-\mu_{s-z}(\alpha_{s-r}(a)\overline{\alpha}_{s-z}(1))\big)]q.\ \ (\textrm{because}\ r-z<0, s-z>0)\\
\end{array}
\end{eqnarray*}
In the bottom line, for $\alpha_{s-r}(a)\overline{\alpha}_{s-z}(1)$, since $z-r>0$, we have
$$\alpha_{s-r}(a)\overline{\alpha}_{s-z}(1)=\alpha_{s-z}(\alpha_{z-r}(a))\overline{\alpha}_{s-z}(1)=\alpha_{s-z}(\alpha_{z-r}(a)1)=\alpha_{s-r}(a).$$
Let $b=\alpha_{z-r}(a)$, so, as $\alpha_{s-r}(a)=\alpha_{s-z}(\alpha_{z-r}(a))=\alpha_{s-z}(b)$, we get
\begin{eqnarray*}
\begin{array}{l}
q[j_{\B}(\mu_{r}(a)-\mu_{s}(\alpha_{s-r}(a)))j_{\Gamma}(z)]q\\
=q[j_{\Gamma}(z)j_{\B}\big(\mu_{0}(\alpha_{z-r}(a))-\mu_{s-z}(\alpha_{s-r}(a))\big)]q\\
=q[j_{\Gamma}(z)j_{\B}\big(\mu_{0}(b)-\mu_{s-z}(\alpha_{s-z}(b))\big)]q.\\
\end{array}
\end{eqnarray*}
Now for convenience, let $t=s-z>0$. Then by applying the equation (\ref{eq2}), we obtain
\begin{eqnarray*}
\begin{array}{rcl}
q[j_{\B}(\mu_{r}(a)-\mu_{s}(\alpha_{s-r}(a)))j_{\Gamma}(z)]q&=&q[j_{\Gamma}(z)j_{\B}\big(\mu_{0}(b)-\mu_{t}(\alpha_{t}(b))\big)j_{\Gamma}(0)^{*}]q\\
&=&W_{z}^{*}\pi_{A}(b)(q-W_{t}^{*}W_{t})W_{0}\\
&=&\Psi(v_{z}^{*}i_{A}(b)(1-v_{t}^{*}v_{t})v_{0})\in\Psi(J).\\
\end{array}
\end{eqnarray*}
At last, if $z\leq r$ (and $0\leq r<s$), then
\begin{eqnarray*}
\begin{array}{l}
q[j_{\B}(\mu_{r}(a)-\mu_{s}(\alpha_{s-r}(a)))j_{\Gamma}(z)]q\\
=q[(j_{\B}\circ\beta_{r})\big(\mu_{0}(a)-\mu_{s-r}(\alpha_{s-r}(a))\big)j_{\Gamma}(z)]q\\
=q[j_{\Gamma}(r)j_{\B}\big(\mu_{0}(a)-\mu_{s-r}(\alpha_{s-r}(a))\big)j_{\Gamma}(r)^{*}j_{\Gamma}(z)]q\\
=q[j_{\Gamma}(r)j_{\B}\big(\mu_{0}(a)-\mu_{s-r}(\alpha_{s-r}(a))\big)j_{\Gamma}(r-z)^{*}j_{\Gamma}(z)^{*}j_{\Gamma}(z)]q\\
=q[j_{\Gamma}(r)j_{\B}\big(\mu_{0}(a)-\mu_{t}(\alpha_{t}(a))\big)j_{\Gamma}(r-z)^{*}]q\ \ (r-z\geq0),\\
\end{array}
\end{eqnarray*}
where $t=s-r>0$. Therefore again by applying the equation (\ref{eq2}), we have
\begin{eqnarray*}
\begin{array}{rcl}
q[j_{\B}(\mu_{r}(a)-\mu_{s}(\alpha_{s-r}(a)))j_{\Gamma}(z)]q&=&q[j_{\Gamma}(r)j_{\B}\big(\mu_{0}(a)-\mu_{t}(\alpha_{t}(a))\big)j_{\Gamma}(r-z)^{*}]q\\
&=&W_{r}^{*}\pi_{A}(a)(q-W_{t}^{*}W_{t})W_{r-z}\\
&=&\Psi(v_{r}^{*}i_{A}(a)(1-v_{t}^{*}v_{t})v_{r-z})\in\Psi(J).\\
\end{array}
\end{eqnarray*}
Thus $\Psi(J)=q(\I\times_{\beta}\Gamma)q$, which implies that the ideal $J$ is isomorphic to $q(\I\times_{\beta}\Gamma)q$ via the isomorphism $\Psi$.

To show that $J\simeq q(\I\times_{\beta}\Gamma)q$ is a full corner in $\I\times_{\beta}\Gamma$, take an approximate identity in $\{a_{i}\}$ in $A$. Then for any spanning element spanned $j_{\B}(\mu_{r}(a)-\mu_{s}(\alpha_{s-r}(a)))j_{\Gamma}(z)$ of $\I\times_{\beta}\Gamma$,
where $a\in A$ and $r,s,z\in\Gamma$ such that $r<s$,
$$j_{\B}(\mu_{r}(aa_{i})-\mu_{s}(\alpha_{s-r}(aa_{i})))j_{\Gamma}(z)\rightarrow j_{\B}(\mu_{r}(a)-\mu_{s}(\alpha_{s-r}(a)))j_{\Gamma}(z)$$ in the norm topology of $\I\times_{\beta}\Gamma$.
But for $j_{\B}(\mu_{r}(aa_{i})-\mu_{s}(\alpha_{s-r}(aa_{i})))j_{\Gamma}(z)$, first, by the covariance equation of $(j_{\B},j_{\Gamma})$, we have
\begin{eqnarray*}
\begin{array}{l}
j_{\B}(\mu_{r}(aa_{i})-\mu_{s}(\alpha_{s-r}(aa_{i})))j_{\Gamma}(z)\\
=(j_{\B}\circ\beta_{r})\big(\mu_{0}(aa_{i})-\mu_{s-r}(\alpha_{s-r}(aa_{i}))\big)j_{\Gamma}(z)\\
=j_{\Gamma}(r)j_{\B}\big(\mu_{0}(aa_{i})-\mu_{s-r}(\alpha_{s-r}(aa_{i}))\big)j_{\Gamma}(r)^{*}j_{\Gamma}(z).\\
\end{array}
\end{eqnarray*}
Then in the bottom line, for $\mu_{0}(aa_{i})-\mu_{s-r}(\alpha_{s-r}(aa_{i}))$, we apply the equation (\ref{eq3}) to get
\begin{eqnarray*}
\begin{array}{l}
j_{\Gamma}(r)j_{\B}\big(\mu_{0}(aa_{i})-\mu_{s-r}(\alpha_{s-r}(aa_{i}))\big)j_{\Gamma}(r)^{*}j_{\Gamma}(z)\\
=j_{\Gamma}(r)j_{\B}\big([\mu_{0}(a)-\mu_{s-r}(\alpha_{s-r}(a))][\mu_{0}(a_{i})-\mu_{s-r}(\alpha_{s-r}(a_{i}))]\big)j_{\Gamma}(-r)j_{\Gamma}(z)\\
=j_{\Gamma}(r)j_{\B}\big(\mu_{0}(a)-\mu_{s-r}(\alpha_{s-r}(a))\big)j_{\B}\big(\mu_{0}(a_{i})-\mu_{s-r}(\alpha_{s-r}(a_{i}))\big)j_{\Gamma}(z-r)\\
=j_{\Gamma}(r)j_{\B}\big(\mu_{0}(a)\overline{\mu_{0}}(1)-\mu_{s-r}(\alpha_{s-r}(a))\overline{\mu_{0}}(1)\big)j_{\B}\big(\overline{\mu_{0}}(1)\mu_{0}(a_{i})-\overline{\mu_{0}}(1)\mu_{s-r}(\alpha_{s-r}(a_{i}))\big)j_{\Gamma}(z-r)\\
=j_{\Gamma}(r)j_{\B}\big(\mu_{0}(a)-\mu_{s-r}(\alpha_{s-r}(a))\big)\overline{j_{\B}}(\overline{\mu_{0}}(1))j_{\B}\big(\mu_{0}(a_{i})-\mu_{s-r}(\alpha_{s-r}(a_{i}))\big)j_{\Gamma}(z-r)\\
=\big[j_{\Gamma}(r)j_{\B}\big(\mu_{0}(a)-\mu_{s-r}(\alpha_{s-r}(a))\big)\big]q\big[j_{\B}\big(\mu_{0}(a_{i})-\mu_{s-r}(\alpha_{s-r}(a_{i}))\big)j_{\Gamma}(z-r)\big],
\end{array}
\end{eqnarray*}
which belongs to $(\I\times_{\beta}\Gamma)q(\I\times_{\beta}\Gamma)$. So we must have $j_{\B}(\mu_{r}(a)-\mu_{s}(\alpha_{s-r}(a)))j_{\Gamma}(z)\in\overline{(\I\times_{\beta}\Gamma)q(\I\times_{\beta}\Gamma)}$. Therefore
$\overline{(\I\times_{\beta}\Gamma)q(\I\times_{\beta}\Gamma)}=\I\times_{\beta}\Gamma$, which means that $J\simeq q(\I\times_{\beta}\Gamma)q$ is a full corner in $\I\times_{\beta}\Gamma$.
\end{proof}

\begin{prop}
\label{essen}
The ideal $J$ is an essential ideal of $A\times_{\alpha}^{\piso}\Gamma^{+}$, and this generalizes \cite[Propostion 2.5]{AZ} to any totally ordered abelian (discrete) group $\Gamma$, not only subgroups of $\R$.
\end{prop}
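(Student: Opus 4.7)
The plan is to deduce the essentiality of $J$ from the identifications already established in Theorem \ref{main} and Lemma \ref{ker}, together with the fact recorded at the end of Section \ref{sec:alg B} that $\I\times_\beta\Gamma$ is an essential ideal of $\B\times_\beta\Gamma$. Since $\Psi$ is an isomorphism of $A\times_\alpha^{\piso}\Gamma^+$ onto $q(\B\times_\beta\Gamma)q$ that carries $J$ onto $q(\I\times_\beta\Gamma)q$, it suffices to show that $q(\I\times_\beta\Gamma)q$ is an essential ideal of $q(\B\times_\beta\Gamma)q$.

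I would handle this by isolating the following general fact: if $K$ is an essential ideal of a $C^*$-algebra $C$ and $p\in\M(C)$ is a full projection (that is, $\overline{CpC}=C$), then $pKp$ is an essential ideal of $pCp$. Once the lemma is in place, the proposition is immediate on applying it with $C=\B\times_\beta\Gamma$, $K=\I\times_\beta\Gamma$, and $p=q$; the fullness of $q$ is precisely what was verified in the last paragraph of the proof of Theorem \ref{main}.

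For the lemma itself, $pKp$ is clearly an ideal of $pCp$, so the only content is essentiality. Suppose $y\in pCp$ satisfies $y(pKp)=0$. Using $y=yp$, this reduces to $yKp=0$. Since $K$ is an ideal, $yKCp\subset yKp=0$, hence $(yK)(CpC)=0$, and by density of $CpC$ in $C$ we obtain $(yK)C=0$. For any $k\in K$ the element $yk\in C$ then satisfies $(yk)C=0$, which forces $(yk)(yk)^{*}=0$ and so $yk=0$; thus $yK=0$, and essentiality of $K$ in $C$ gives $y=0$. The main (and only mildly subtle) point is the passage $yKp=0\Rightarrow yKC=0$, which is exactly where the fullness of the projection $q$ is needed. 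Since no restriction on $\Gamma$ beyond being a totally ordered abelian discrete group has been used anywhere in the construction, this automatically generalizes \cite[Proposition 2.5]{AZ}.
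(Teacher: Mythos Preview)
Your proposal is correct and follows essentially the same approach as the paper: both deduce essentiality of $J$ from the facts that $q$ is full, that $\Psi$ carries $J$ onto $q(\I\times_{\beta}\Gamma)q$, and that $\I\times_{\beta}\Gamma$ is essential in $\B\times_{\beta}\Gamma$. The paper's proof is a one-sentence sketch that simply invokes these three ingredients, whereas you have isolated and actually proved the general lemma (essential ideal plus full projection implies the corner of the ideal is essential in the corner) that makes the sketch rigorous; in particular your use of fullness to pass from $yKp=0$ to $yKC=0$ is exactly the point the paper leaves implicit.
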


\begin{proof}
One can see this by knowing that the ideal $J$ and $A\times_{\alpha}^{\piso}\Gamma^{+}$ are full corners in $\I\times_{\beta}\Gamma$ and $\B\times_{\beta}\Gamma$ respectively, and the fact that $\I\times_{\beta}\Gamma$ is essential ideal of
$\B\times_{\beta}\Gamma$.
\end{proof}

\section{The partial-isometric crossed products by semigroups of automorphisms}
\label{sec:auto}
In this section we suppose that $(A,\Gamma^{+},\alpha)$ is a system consisting of a $C^{*}$-algebra $A$ and an action $\alpha:\Gamma^{+}\rightarrow \Aut(A)$ of $\Gamma^{+}$ by automorphisms of $A$. If for every $s<0$ in $\Gamma$,
we let $\alpha_{s}:=\alpha_{-s}^{-1}$, then $\alpha$ extends to an action of $\Gamma$ by automorphisms of $A$. Now let $B_{\Gamma}$ be the subalgebra of $\ell^{\infty}(\Gamma)$ spanned by $\{1_{s}\in\ell^{\infty}(\Gamma):s\in\Gamma\}$, where
\[
1_{s}(t)=
   \begin{cases}
      1 &\textrm{if}\empty\ \text{$t\geq s$}\\
      0 &\textrm{if}\empty\ \text{$t<s$}.
   \end{cases}
\]
Then there is an action $\tau:\Gamma\rightarrow \Aut(B_{\Gamma})$ of $\Gamma$ by automorphisms of $B_{\Gamma}$ given by translation, such that $\tau_{x}(1_{s})=1_{s+x}$ for all $x,s\in\Gamma$. Moreover, $x\mapsto \tau_{x}\otimes \alpha_{x}^{-1}$
defines an action of $\Gamma$ on the algebra $B_{\Gamma}\otimes A$ by automorphisms, and therefore we obtain a classical dynamical system $(B_{\Gamma}\otimes A, \Gamma, \tau\otimes \alpha^{-1})$. Also note that
$$B_{\Gamma,\infty}:=\clsp\{1_{s}-1_{t}: s<t\in \Gamma\}$$ is an ideal of $B_{\Gamma}$ which is $\tau$-invariant. So it follows that $B_{\Gamma,\infty}\otimes A$ is a $(\tau\otimes \alpha^{-1})$-invariant ideal of $B_{\Gamma}\otimes A$.
We want to recover \cite[Corollary 5.3]{AZ} from our discussion here in \S\ref{sec:full piso}.

\begin{prop}
\label{B-auto}
There is an isomorphism $\delta:B_{\Gamma}\otimes A\rightarrow \B$ such that $\beta_{x}(\delta(\xi))=\delta((\tau\otimes\alpha^{-1})_{x}(\xi))$ for all $\xi\in(B_{\Gamma}\otimes A)$ and $x\in\Gamma$, and it maps the ideal $B_{\Gamma,\infty}\otimes A$ onto $\I$.
Then $\delta$ induces an isomorphism $\Delta$ of $((B_{\Gamma}\otimes A)\times_{\tau\otimes\alpha^{-1}}\Gamma,k)$ onto $(\B\times_{\beta}\Gamma,j)$ such that
\begin{align}
\label{eq5}
\Delta(k_{B_{\Gamma}\otimes A}(\xi)k_{\Gamma}(s))=j_{\B}(\delta(\xi))j_{\Gamma}(s)\ \ \textrm{for all}\ \xi\in(B_{\Gamma}\otimes A), s\in\Gamma,
\end{align}
and it maps the ideal $(B_{\Gamma,\infty}\otimes A)\times_{\tau\otimes\alpha^{-1}}\Gamma$ onto $\I\times_{\beta}\Gamma$.
\end{prop}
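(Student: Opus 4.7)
The plan is to define $\delta$ on generators by $\delta(1_s \otimes a) := \mu_s(\alpha_s(a))$; this makes sense because, in the automorphism setting, $\alpha$ extends to an action of all of $\Gamma$. An equivalent and slightly more conceptual description is the pointwise formula $\delta(\xi)(y) = \alpha_y(\xi(y))$ on the dense $*$-subalgebra $B_\Gamma \odot A \subset \ell^\infty(\Gamma,A)$. First I would verify that this gives a well-defined isometric $*$-homomorphism: multiplicativity and $*$-preservation follow pointwise from the fact that each $\alpha_y$ is an automorphism, and $\delta$ is isometric because each $\alpha_y$ is norm-preserving. Evaluating on a generator gives $\delta(1_s \otimes a) = \mu_s(\alpha_s(a))$, so $\delta$ takes values in $\B$; surjectivity onto $\B$ follows because, as $a$ ranges over $A$, so does $\alpha_s(a)$, so every generator $\mu_s(b)$ of $\B$ is hit.

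Next I would verify the equivariance $\beta_x \circ \delta = \delta \circ (\tau \otimes \alpha^{-1})_x$ by a one-line check on generators: both sides send $1_s \otimes a$ to $\mu_{s+x}(\alpha_s(a))$. For the ideal correspondence, I would evaluate $\delta((1_s - 1_t) \otimes a) = \mu_s(\alpha_s(a)) - \mu_t(\alpha_t(a))$ for $s<t$; writing $b = \alpha_s(a)$, this equals $\mu_s(b) - \mu_t(\alpha_{t-s}(b))$, which is precisely a generator of $\I$. Since $\alpha_s$ is a bijection of $A$, this shows $\delta$ maps $B_{\Gamma,\infty} \otimes A$ onto $\I$.

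Finally, to obtain $\Delta$, I would invoke functoriality of the full group crossed product: the $\Gamma$-equivariant isomorphism of dynamical systems $\delta\colon (B_\Gamma \otimes A, \Gamma, \tau \otimes \alpha^{-1}) \to (\B, \Gamma, \beta)$ induces an isomorphism $\Delta$ of the associated crossed products satisfying $\Delta(k_{B_\Gamma \otimes A}(\xi)\, k_\Gamma(s)) = j_\B(\delta(\xi))\, j_\Gamma(s)$. Because $\delta$ restricts to an equivariant isomorphism of the invariant ideals $B_{\Gamma,\infty} \otimes A$ and $\I$, the same functoriality gives that $\Delta$ restricts to an isomorphism of $(B_{\Gamma,\infty}\otimes A)\times_{\tau\otimes\alpha^{-1}}\Gamma$ onto $\I\times_{\beta}\Gamma$. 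I do not anticipate any serious obstacle; the only mildly delicate point is choosing the correct formula for $\delta$. The automorphism hypothesis is essential at two places -- so that $\alpha_y$ makes sense for negative $y$, and so that $\alpha_s$ is surjective enough to exhaust the generators of $\B$ -- but once $\delta$ is in hand, everything reduces to routine checking on spanning elements.
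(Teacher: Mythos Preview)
Your proposal is correct and follows essentially the same approach as the paper: the same formula $\delta(1_s\otimes a)=\mu_s(\alpha_s(a))$, the same generator-level verification of equivariance and of the ideal correspondence, and the same appeal to functoriality of the crossed product for $\Delta$. The only notable difference is technical: the paper builds $\delta$ as $\varphi\otimes\psi$ via the universal property of the (max $=$ min) tensor product and then proves injectivity separately by comparing two faithful spatial representations, whereas your pointwise formula $\delta(\xi)(y)=\alpha_y(\xi(y))$ gives existence and isometry at once---this is cleaner, but implicitly uses that $B_\Gamma\otimes A$ sits isometrically in $\ell^\infty(\Gamma,A)$, which you should state (it is standard since $B_\Gamma$ is commutative).
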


\begin{proof}
First, recall that as the algebra $B_{\Gamma}$ is abelian, $B_{\Gamma}\otimes A=B_{\Gamma}\otimes_{\textrm{max}} A=B_{\Gamma}\otimes_{\textrm{min}} A$. Also note that since $\ell^{\infty}(\Gamma,A)$ sits in $\ell^{\infty}(\Gamma,\M(A))$ as an essential ideal, $\ell^{\infty}(\Gamma,\M(A))$ is embedded in $\M(\ell^{\infty}(\Gamma,A))$ as a  $C^*$-subalgebra. Now define the maps
$$\varphi:B_{\Gamma}\rightarrow \M(\ell^{\infty}(\Gamma,A))\ \ \textrm{and}\ \ \psi:A\rightarrow \M(\ell^{\infty}(\Gamma,A))$$
by $$\varphi(f)(x)=f(x)1_{\M(A)}\ \ \textrm{and}\ \ \psi(a)(x)=\alpha_{x}(a)$$ for every $f\in B_{\Gamma}$, $a\in A$, and $x\in\Gamma$. One can see that $\varphi$ and $\psi$ are $*$-homomorphisms  with commuting ranges, meaning that
$\varphi(f)\psi(a)=\psi(a)\varphi(f)$ for all $f\in B_{\Gamma}$ and $a\in A$. Therefore there is a homomorphism $\delta:=\varphi\otimes\psi:B_{\Gamma}\otimes A\rightarrow \M(\ell^{\infty}(\Gamma,A))$ such that
$\delta(f\otimes a)=\varphi(f)\psi(a)=\psi(a)\varphi(f)$ for every $f\in B_{\Gamma}$ and $a\in A$. We claim that $\delta$ is an isomorphism of $B_{\Gamma}\otimes A$ onto the algebra $\B$. To see this, we first show that $\delta(B_{\Gamma}\otimes A)=\B$. For each spanning element
$1_{x}\otimes a$ of $B_{\Gamma}\otimes A$, we have
\[
\delta(1_{x}\otimes a)(y)=(\varphi(1_{x})\psi(a))(y)=\varphi(1_{x})(y)\psi(a)(y)=1_{x}(y)\alpha_{y}(a)=
   \begin{cases}
      \alpha_{y}(a) &\textrm{if}\empty\ \text{$y\geq x$}\\
      0 &\textrm{if}\empty\ \text{$y<x$},
   \end{cases}
\]
which is equal to
\[
\mu_{x}(\alpha_{x}(a))(y)=
   \begin{cases}
      \alpha_{y-x}(\alpha_{x}(a))=\alpha_{y}(a) &\textrm{if}\empty\ \text{$y\geq x$}\\
      0 &\textrm{if}\empty\ \text{$y<x$},
   \end{cases}
\]
for every $y\in\Gamma$. So it follows that $\delta(1_{x}\otimes a)=\mu_{x}(\alpha_{x}(a))\in\B$, and therefore $\delta(B_{\Gamma}\otimes A)\subset\B$. Conversely, for any spanning element $\mu_{x}(a)$ of $\B$, by applying
$\delta(1_{x}\otimes a)=\mu_{x}(\alpha_{x}(a))$, we get
$$\delta(1_{x}\otimes \alpha_{x}^{-1}(a))=\mu_{x}(\alpha_{x}(\alpha_{x}^{-1}(a)))=\mu_{x}(a).$$ Thus $\mu_{x}(a)=\delta(1_{x}\otimes \alpha_{x}^{-1}(a))\in\delta(B_{\Gamma}\otimes A)$, which implies that $\B\subset\delta(B_{\Gamma}\otimes A)$.

Now we show that $\delta$ is injective. Define the map $M:B_{\Gamma}\rightarrow B(\ell^{2}(\Gamma))$ by $(M(f)\lambda)(x)=f(x)\lambda(x)$ for every $f\in B_{\Gamma}$ and $\lambda\in\ell^{2}(\Gamma)$, which is a faithful (nondegenerate) representation. Then let
$\pi:A\rightarrow B(H)$ be a faithful (nondegenerate) representation of $A$ on some Hilbert space $H$. By \cite[Corollary B.11]{RW}, there is a faithful representation $M\otimes\pi:B_{\Gamma}\otimes A\rightarrow B(\ell^{2}(\Gamma)\otimes H)$ such that
$M\otimes\pi(f\otimes a)=M(f)\otimes \pi(a)$. On the other hand, the map $\tilde{\pi}:\B\rightarrow B(\ell^{2}(\Gamma,H))$ defined by $(\tilde{\pi}(\xi)\eta)(x)=\pi(\alpha_{x}^{-1}(\xi(x)))\eta(x)$, where $\xi\in\B$ and $\eta\in\ell^{2}(\Gamma,H)$, is a faithful representation of
$\B$ on the Hilbert space $\ell^{2}(\Gamma,H)$. Now let $U$ be the isomorphism (unitary) of $\ell^{2}(\Gamma)\otimes H$ onto $\ell^{2}(\Gamma,H)$ such that $U(\lambda\otimes h)(x)=\lambda(x)h$ for all $\lambda\in\ell^{2}(\Gamma)$ and $h\in H$. Then we have
\begin{eqnarray*}
\begin{array}{rcl}
\big(\tilde{\pi}(\delta(f\otimes a))U(\lambda\otimes h)\big)(x)&=&\pi(\alpha_{x}^{-1}(\delta(f\otimes a)(x)))U(\lambda\otimes h)(x)\\
&=&\pi(\alpha_{x}^{-1}(f(x)\alpha_{x}(a)))(\lambda(x)h)\\
&=&\pi(f(x)a)(\lambda(x)h)\\
&=&f(x)\lambda(x)\pi(a)(h),
\end{array}
\end{eqnarray*}
and
\begin{eqnarray*}
\begin{array}{rcl}
U\big((M\otimes\pi(f\otimes a))(\lambda\otimes h)\big)(x)&=&U\big((M(f)\otimes \pi(a))(\lambda\otimes h)\big)(x)\\
&=&U\big(M(f)\lambda\otimes \pi(a)(h))(x)\\
&=&(M(f)\lambda)(x)\pi(a)(h)=f(x)\lambda(x)\pi(a)(h).
\end{array}
\end{eqnarray*}
Thus $\tilde{\pi}(\delta(f\otimes a))U(\lambda\otimes h)=U\big((M\otimes\pi(f\otimes a))(\lambda\otimes h)\big)$. It therefore follows that
$$U^{*}\tilde{\pi}(\delta(\xi))U=(M\otimes\pi)(\xi)$$ for all $\xi\in B_{\Gamma}\otimes A$, from which we conclude that $\delta$ must be injective. This is because $\tilde{\pi}$ and $M\otimes\pi$ are injective, and $U$ is a unitary.
Consequently $B_{\Gamma}\otimes A\simeq\delta(B_{\Gamma}\otimes A)=\B$. Moreover, $B_{\Gamma,\infty}\otimes A$ is isomorphic to
$\I$ via $\delta$. This is because for $a\in A$ and $x<y\in\Gamma$,
\begin{eqnarray}
\label{eq4}
\begin{array}{rcl}
\delta((1_{x}-1_{y})\otimes a)&=&\delta((1_{x}\otimes a)-(1_{y}\otimes a))\\
&=&\delta(1_{x}\otimes a)-\delta(1_{y}\otimes a)\\
&=&\mu_{x}(\alpha_{x}(a))-\mu_{y}(\alpha_{y}(a))\\
&=&\mu_{x}(\alpha_{x}(a))-\mu_{y}(\alpha_{y-x}(\alpha_{x}(a)))\in\I.
\end{array}
\end{eqnarray}
Thus $\delta(B_{\Gamma,\infty}\otimes A)\subset\I$. For the other inclusion, by the computation in (\ref{eq4}),
\begin{eqnarray*}
\begin{array}{rcl}
\delta((1_{x}-1_{y})\otimes \alpha_{x}^{-1}(a))&=&\delta((1_{x}-1_{y})\otimes \alpha_{-x}(a))\\
&=&\mu_{x}(\alpha_{x}(\alpha_{-x}(a)))-\mu_{y}(\alpha_{y}(\alpha_{-x}(a)))\\
&=&\mu_{x}(a)-\mu_{y}(\alpha_{y-x}(a)).
\end{array}
\end{eqnarray*}
Therefore each spanning element $\mu_{x}(a)-\mu_{y}(\alpha_{y-x}(a))$ of $\I$ equals $\delta((1_{x}-1_{y})\otimes \alpha_{x}^{-1}(a))$, which belongs to $\delta(B_{\Gamma,\infty}\otimes A)$. So $\I\subset\delta(B_{\Gamma,\infty}\otimes A)$, and therefore
$\delta(B_{\Gamma,\infty}\otimes A)=\I$. This implies that $B_{\Gamma,\infty}\otimes A\simeq\I$ via $\delta$.

Finally we show that the isomorphism $\delta$ satisfies $\beta_{x}\circ\delta=\delta\circ(\tau\otimes\alpha^{-1})_{x}$. Therefore by \cite[Lemma 2.65]{W}, there is an isomorphism
$\Delta:((B_{\Gamma}\otimes A)\times_{\tau\otimes\alpha^{-1}}\Gamma,k)\rightarrow (\B\times_{\beta}\Gamma,j)$ such that
$$\Delta(k_{B_{\Gamma}\otimes A}(\xi)k_{\Gamma}(s))=j_{\B}(\delta(\xi))j_{\Gamma}(s)\ \ \textrm{for all}\ \xi\in(B_{\Gamma}\otimes A), s\in\Gamma.$$
For any spanning element $1_{s}\otimes a$ of $B_{\Gamma}\otimes A$, we have
$$\beta_{x}(\delta(1_{s}\otimes a))=\beta_{x}(\mu_{s}(\alpha_{s}(a)))=\mu_{s+x}(\alpha_{s}(a)).$$
On the other hand,
\begin{eqnarray*}
\begin{array}{rcl}
\delta((\tau\otimes\alpha^{-1})_{x}(1_{s}\otimes a))&=&\delta(\tau_{x}\otimes\alpha^{-1}_{x}(1_{s}\otimes a))\\
&=&\delta(\tau_{x}(1_{s})\otimes\alpha^{-1}_{x}(a))\\
&=&\delta(1_{s+x}\otimes\alpha_{-x}(a))\\
&=&\mu_{s+x}(\alpha_{s+x}(\alpha_{-x}(a)))\ \ [\textrm{by applying}\ \delta(1_{t}\otimes a)=\mu_{t}(\alpha_{t}(a))]\\
&=&\mu_{s+x}(\alpha_{s}(a)).\\
\end{array}
\end{eqnarray*}
So $\beta_{x}\circ\delta=\delta\circ(\tau\otimes\alpha^{-1})_{x}$ is valid. Note that
$$(B_{\Gamma,\infty}\otimes A)\times_{\tau\otimes\alpha^{-1}}\Gamma\simeq\Delta\big((B_{\Gamma,\infty}\otimes A)\times_{\tau\otimes\alpha^{-1}}\Gamma\big)=\I\times_{\beta}\Gamma$$
follows by some routine computation on spanning elements using the equation (\ref{eq5}). We skip it here.
\end{proof}

\begin{cor}
\cite[Corollary 5.3]{AZ}\label{full piso-auto}
Let $p=\overline{k}_{B_{\Gamma}\otimes A}(1_{0}\otimes 1_{\M(A)})\in\M\big((B_{\Gamma}\otimes A)\times_{\tau\otimes\alpha^{-1}}\Gamma\big)$. We have $\overline{\Delta}(p)=q$, and therefore $A\times_{\alpha}^{\piso}\Gamma^{+}$ and the ideal $J$ are isomorphic to the full corners $p[(B_{\Gamma}\otimes A)\times_{\tau\otimes\alpha^{-1}}\Gamma]p$ and $p[(B_{\Gamma,\infty}\otimes A)\times_{\tau\otimes\alpha^{-1}}\Gamma]p$ respectively.
\end{cor}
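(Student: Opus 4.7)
The plan is to reduce the corollary entirely to Theorem \ref{main}, Lemma \ref{ker}, and Proposition \ref{B-auto}, so that all that really needs verification is the single identity $\overline{\Delta}(p)=q$. Once that identity is in hand, Proposition \ref{B-auto} immediately transports the full-corner realisations from Theorem \ref{main} and Lemma \ref{ker} back across $\Delta$, and the statement falls out.

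The key step is the computation of $\overline{\Delta}(p)$. First I would observe that since $\Delta$ is a nondegenerate isomorphism, it extends uniquely to a strictly continuous isomorphism $\overline{\Delta}$ of the multiplier algebras, and similarly $\delta$ extends to $\overline{\delta}:\M(B_{\Gamma}\otimes A)\to\M(\B)$. The defining equation (\ref{eq5}) together with nondegeneracy of $j_{\B}$ and $k_{B_{\Gamma}\otimes A}$ gives the multiplier-level relation
\[
\overline{\Delta}\circ\overline{k}_{B_{\Gamma}\otimes A}=\overline{j_{\B}\circ\delta}=\overline{j_{\B}}\circ\overline{\delta}.
\]
On generators we have $\delta(1_{0}\otimes a)=\mu_{0}(\alpha_{0}(a))=\mu_{0}(a)$ for every $a\in A$, so taking an approximate identity $\{a_{i}\}$ of $A$, strict continuity forces $\overline{\delta}(1_{0}\otimes 1_{\M(A)})=\overline{\mu_{0}}(1_{\M(A)})$. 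Consequently
\[
\overline{\Delta}(p)=\overline{\Delta}\bigl(\overline{k}_{B_{\Gamma}\otimes A}(1_{0}\otimes 1_{\M(A)})\bigr)=\overline{j_{\B}}\bigl(\overline{\mu_{0}}(1)\bigr)=\overline{j_{\B}\circ\mu_{0}}(1)=q,
\]
which is the identity we wanted.

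With $\overline{\Delta}(p)=q$ established, the isomorphism $\Delta$ restricts to an isomorphism
\[
p\bigl[(B_{\Gamma}\otimes A)\times_{\tau\otimes\alpha^{-1}}\Gamma\bigr]p \;\longrightarrow\; q(\B\times_{\beta}\Gamma)q,
\]
and Theorem \ref{main} identifies the right-hand side with $A\times_{\alpha}^{\piso}\Gamma^{+}$. Similarly, because $\Delta$ sends the ideal $(B_{\Gamma,\infty}\otimes A)\times_{\tau\otimes\alpha^{-1}}\Gamma$ onto $\I\times_{\beta}\Gamma$ (the last assertion of Proposition \ref{B-auto}), it restricts to an isomorphism of $p[(B_{\Gamma,\infty}\otimes A)\times_{\tau\otimes\alpha^{-1}}\Gamma]p$ onto $q(\I\times_{\beta}\Gamma)q$, which Lemma \ref{ker} identifies with $J$. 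Fullness of both corners is then automatic: since $q$ is full in $\B\times_{\beta}\Gamma$ and in $\I\times_{\beta}\Gamma$ by Theorem \ref{main} and Lemma \ref{ker}, its preimage $p$ under the isomorphism $\overline{\Delta}$ is full in $(B_{\Gamma}\otimes A)\times_{\tau\otimes\alpha^{-1}}\Gamma$ and in $(B_{\Gamma,\infty}\otimes A)\times_{\tau\otimes\alpha^{-1}}\Gamma$ respectively.

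The only point requiring real care is the strict-continuity argument showing $\overline{\delta}(1_{0}\otimes 1_{\M(A)})=\overline{\mu_{0}}(1)$, since $1_{0}\otimes 1_{\M(A)}$ lives in $\M(B_{\Gamma}\otimes A)$ rather than in $B_{\Gamma}\otimes A$; this is the main obstacle, and it is handled by testing against the spanning elements $1_{x}\otimes a$ of $B_{\Gamma}\otimes A$ and using extendibility of $\mu_{0}$ (Lemma \ref{mux}) together with the explicit formula $\delta(1_{x}\otimes a)=\mu_{x}(\alpha_{x}(a))$ from the proof of Proposition \ref{B-auto}. Everything else is a direct transport of structure.
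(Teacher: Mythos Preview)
Your proof is correct and follows essentially the same approach as the paper: both establish $\overline{\Delta}(p)=q$ via a strict-limit argument using an approximate identity and the formula $\delta(1_{0}\otimes a)=\mu_{0}(a)$, then invoke Theorem~\ref{main}, Lemma~\ref{ker}, and Proposition~\ref{B-auto} to transport the corner identifications. The only cosmetic difference is that you route the computation through the extended map $\overline{\delta}$, whereas the paper works directly with $j_{\B}(\mu_{0}(a_{i}))$; you also make explicit the preservation of fullness under $\Delta$, which the paper leaves implicit.
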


\begin{proof}
First note that since the homomorphism $j_{\B}$ is nondegenerate, so is the isomorphism $\Delta$. Therefore $\Delta$ extends to an isometry of multiplier algebras. Now let $\{a_{i}\}$ be an approximate identity in $A$. By using the equation (\ref{eq5}), we have
$$\Delta(k_{B_{\Gamma}\otimes A}(1_{0}\otimes a_{i}))=j_{\B}(\delta(1_{0}\otimes a_{i}))=j_{\B}(\mu_{0}(a_{i})).$$
So, as $1_{0}\otimes a_{i}\rightarrow 1_{0}\otimes 1_{\M(A)}$ in $\M(B_{\Gamma}\otimes A)$ strictly, in the equation above, the left hand side approaches $\overline{\Delta}(\overline{k}_{B_{\Gamma}\otimes A}(1_{0}\otimes 1_{\M(A)}))=\overline{\Delta}(p)$,
while the right hand side approaches $\overline{j_{\B}}(\overline{\mu_{0}}(1))=q$ in $\M(\B\times_{\beta}\Gamma)$ strictly. Thus we must have $\overline{\Delta}(p)=q$. Therefore, by Proposition \ref{B-auto}, it follows that
$$p[(B_{\Gamma}\otimes A)\times_{\tau\otimes\alpha^{-1}}\Gamma]p\simeq\Delta\big(p[(B_{\Gamma}\otimes A)\times_{\tau\otimes\alpha^{-1}}\Gamma]p\big)=q(\B\times_{\beta}\Gamma)q,$$ and
$$p[(B_{\Gamma,\infty}\otimes A)\times_{\tau\otimes\alpha^{-1}}\Gamma]p\simeq\Delta\big(p[(B_{\Gamma,\infty}\otimes A)\times_{\tau\otimes\alpha^{-1}}\Gamma]p\big)=q(\I\times_{\beta}\Gamma)q,$$
which by Theorem \ref{main} and Lemma \ref{ker}, are isomorphic to $A\times_{\alpha}^{\piso}\Gamma^{+}$ and the ideal $J$ respectively. Consequently,
$$A\times_{\alpha}^{\piso}\Gamma^{+}\simeq p[(B_{\Gamma}\otimes A)\times_{\tau\otimes\alpha^{-1}}\Gamma]p\ \ \textrm{and}\ \ J\simeq p[(B_{\Gamma,\infty}\otimes A)\times_{\tau\otimes\alpha^{-1}}\Gamma]p$$ via the isomorphism $\Delta^{-1}\circ \Psi$.
\end{proof}

\section{The partial-isometric crossed products by a single endomorphism}
\label{sec:integer}
In this section we want to show that when $\Gamma=\Z$, \cite[Theorem 4.1]{AZ} follows by our discussion here in \S\ref{sec:full piso}. First of all, if $\Gamma=\Z$, then the algebra $\B$ associated to the system $(A,\N,\alpha)$ (generated by the single endomorphism $\alpha:=\alpha_{1}$) is the subalgebra $D$ of $\ell^{\infty}(\Z,A)$ in \cite[\S 1]{KS}. So it is precisely the subalgebra of all elements $\xi\in \ell^{\infty}(\Z,A)$ such that $\lim_{n\rightarrow -\infty} \|\xi(n)\|=0$, and for each $\varepsilon>0$, there exists $m\in \Z$ such that $\|\xi(n)-\alpha_{n-m}(\xi(m))\|<\varepsilon$
for every $n\geq m$ (see Lemma \ref{B property} and \cite[Lemma 1.7]{KS}). Also $\I$ is the ideal $C_{0}(\Z,A)$. This is because for every $a\in A$ and $n\in\Z$, $$(...,0,0,0,a,0,0,0,...)=\mu_{n}(a)-\mu_{n+1}(\alpha(a)),$$ where $a$ is the $n$th slot. Thus $C_{0}(\Z,A)\subset \I$. Conversely, for every $a\in A$ and $m<n\in \Z$,
$$\mu_{m}(a)-\mu_{n}(\alpha_{n-m}(a))=(...,0,0,0,a,\alpha(a),\alpha_{2}(a),...,\alpha_{n-m-1}(a),0,0,0,...),$$ which obviously belongs to $C_{0}(\Z,A)$. This implies that $\I\subset C_{0}(\Z,A)$, and therefore $\I=C_{0}(\Z,A)$. Moreover, in this case, the kernel of the homomorphism $\sigma$
in Lemma \ref{B extension} is precisely $\I=C_{0}(\Z,A)$ (see also \cite[Lemma 1.7]{KS}).

Now, by Theorem \ref{main}, the partial-isometric crossed product $A\times_{\alpha}^{\piso}\N$ of $(A,\N,\alpha)$ is the full corner $q(\B\times_{\beta}\Z)q$, and the ideal $J$ is the full corner $q(C_{0}(\Z,A)\times_{\beta}\Z)q$ by Lemma \ref{ker}. Furthermore, the nondegenerate representation of $\rho\times U$ of $\B\times_{\beta}\Z$ associated to the covariant pair $(\rho,U)$ of $(\B,\Z,\beta)$ in $\L(\ell^{2}(\Z,A))$ (see \S\ref{sec:alg B}), is faithful, which identifies $C_{0}(\Z,A)\times_{\beta}\Z$ with the algebra $\K(\ell^{2}(\Z,A))$ of compact operators on
$\ell^{2}(\Z,A)$ (see \cite[Theorem 1.8]{KS}).
So it follows that $(\rho\times U)\circ \Psi$ maps the ideal $J$ isomorphically on the full corner $\tilde{q}\K(\ell^{2}(\Z,A))\tilde{q}$ of $\K(\ell^{2}(\Z,A))$, where $\tilde{q}\in\M\big(\K(\ell^{2}(\Z,A))\big)=\L(\ell^{2}(\Z,A))$ is the projection $\overline{\rho\circ\mu_{0}}(1)$ such that
\[
(\tilde{q}(f))(n)=
   \begin{cases}
      \overline{\alpha}_{n}(1)f(n) &\textrm{if}\empty\ \text{$n\geq 0$}\\
      0 &\textrm{if}\empty\ \text{$n<0$},
   \end{cases}
\]
for all $f\in\ell^{2}(\Z,A)$. But $\tilde{q}\K(\ell^{2}(\Z,A))\tilde{q}$ is indeed the full corner $p\K(\ell^{2}(\N,A))p$ of $\K(\ell^{2}(\N,A))$, where $p\in\M\big(\K(\ell^{2}(\N,A))\big)=\L(\ell^{2}(\N,A))$ is the projection in \cite[Theorem 4.1]{AZ} given by
$(p(g))(m)=\overline{\alpha}_{m}(1)g(m)$ for every $g\in\ell^{2}(\N,A)$ and $m\in\N$. To see this, let $\{e_{n}\}_{n\in\Z}$ be the usual orthonormal basis for $\ell^{2}(\Z)$ ($\{e_{n}\}_{n\in\N}$ is the one for $\ell^{2}(\N)$ accordingly).
As $\K(\ell^{2}(\Z,A))$ is spanned by elements (compact operators) $\{\Theta_{e_{n}\otimes a, e_{m}\otimes b}: a,b\in A, m,n\in\Z\}$, we have
\begin{eqnarray*}
\begin{array}{rcl}
\tilde{q}\K(\ell^{2}(\Z,A))\tilde{q}&=&\clsp\{\tilde{q}(\Theta_{e_{n}\otimes a, e_{m}\otimes b})\tilde{q}: a,b\in A, m,n\in\Z\}\\
&=&\clsp\{\Theta_{\tilde{q}(e_{n}\otimes a), \tilde{q}(e_{m}\otimes b)}: a,b\in A, m,n\in\Z\}.
\end{array}
\end{eqnarray*}
But if $n<0$ or $m<0$, then $\tilde{q}(e_{n}\otimes a)=0$ or $\tilde{q}(e_{m}\otimes b)=0$, and therefore $\Theta_{\tilde{q}(e_{n}\otimes a), \tilde{q}(e_{m}\otimes b)}=0$. So it follows that
$$\tilde{q}\K(\ell^{2}(\Z,A))\tilde{q}=\clsp\{\Theta_{\tilde{q}(e_{n}\otimes a), \tilde{q}(e_{m}\otimes b)}: a,b\in A, m,n\in\N\}.$$ Now, since $\tilde{q}(e_{n}\otimes a)=p(e_{n}\otimes a)$ for every $a\in A$ and $n\in\N$, we get
\begin{eqnarray*}
\begin{array}{rcl}
\tilde{q}\K(\ell^{2}(\Z,A))\tilde{q}&=&\clsp\{\Theta_{p(e_{n}\otimes a), p(e_{m}\otimes b)}: a,b\in A, m,n\in\N\}\\
&=&\clsp\{p(\Theta_{e_{n}\otimes a, e_{m}\otimes b})p: a,b\in A, m,n\in\N\}=p\K(\ell^{2}(\N,A))p.\\
\end{array}
\end{eqnarray*}
So this is actually a recovery of \cite[Theorem 4.1]{AZ}.

\begin{lemma}
\label{direct lim}
Let $\alpha$ be a single injective (extendible) endomorphism of $A$. Then the algebra $\B$ associated to the system $(A,\N,\alpha)$ is the direct limit of the direct system $(\B_{n},\varphi_{n})_{n\in\Z}$, where for each $n$, $\B_{n}=B_{\Z}\otimes A$, and
$\varphi_{n}:\B_{n}\rightarrow \B_{n+1}$ is defined by
\[
(\varphi_{n}(\xi))(i)=
   \begin{cases}
      \alpha\big(\xi(i)\big) &\textrm{if}\empty\ \text{$i>n$}\\
      \xi(i) &\textrm{if}\empty\ \text{$i\leq n$},
   \end{cases}
\]
for every $\xi\in \B_{n}$.
\end{lemma}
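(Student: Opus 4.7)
The plan is to realize $\B$ as the $C^*$-algebraic direct limit of $(\B_n, \varphi_n)_{n \in \Z}$ by constructing, for each $n \in \Z$, an injective $*$-homomorphism $\psi_n : \B_n \to \B$ satisfying the compatibility $\psi_{n+1} \circ \varphi_n = \psi_n$, and then applying the universal property of the direct limit.

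Viewing $B_\Z \otimes A$ as a $C^*$-subalgebra of $\ell^{\infty}(\Z, A)$ via $(f \otimes a)(i) = f(i)\,a$, I would define
\[
(\psi_n(\xi))(i) = \begin{cases} \alpha_{i-n}(\xi(i)) & \text{if } i \geq n, \\ \xi(i) & \text{if } i < n. \end{cases}
\]
A pointwise calculation shows $\psi_n$ is a $*$-homomorphism, and the compatibility $\psi_{n+1} \circ \varphi_n = \psi_n$ is immediate: for $i \geq n+1$ the extra $\alpha$ applied by $\varphi_n$ combines with $\alpha_{i-n-1}$ of $\psi_{n+1}$ to give $\alpha_{i-n}$, and for $i \leq n$ both sides just read $\xi(i)$. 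Injectivity of $\psi_n$ follows from the injectivity of each $\alpha_k$, peeling off the equations $\psi_n(\xi)(i) = 0$ starting at $i = n$, so $\psi_n$ is automatically isometric.

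The main verification is $\psi_n(\B_n) \subseteq \B$. On a spanning element $1_s \otimes a$, if $s > n$ a direct check gives $\psi_n(1_s \otimes a) = \mu_s(\alpha_{s-n}(a)) \in \B$, while for $s \leq n$ one has the telescoping identity
\[
\psi_n(1_s \otimes a) = \mu_n(a) + \sum_{k=s}^{n-1} \bigl[\mu_k(a) - \mu_{k+1}(\alpha(a))\bigr],
\]
since each difference $\mu_k(a) - \mu_{k+1}(\alpha(a))$ is the element of $\B$ supported at the single point $k$ with value $a$. Linearity and boundedness then extend $\psi_n(\B_n) \subseteq \B$ to all of $\B_n$. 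For density of $\bigcup_n \psi_n(\B_n)$ in $\B$, the direct computation $\psi_x(1_x \otimes a) = \mu_x(a)$ shows every generator of $\B$ lies in some $\psi_n(\B_n)$; together with the compatibility, the subalgebras $\psi_n(\B_n)$ are increasing, and their union is a dense $*$-subalgebra of $\B$.

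To conclude, note that $\alpha$ is an injective $*$-endomorphism, hence isometric, so each $\varphi_n$ is isometric and the canonical maps $\tilde\varphi_n : \B_n \to \varinjlim \B_n$ into the direct limit are isometric. The compatible family $\{\psi_n\}$ then induces $\psi_\infty : \varinjlim \B_n \to \B$, which is isometric on each $\tilde\varphi_n(\B_n)$ (matching the isometric $\psi_n$) and hence on all of $\varinjlim \B_n$, and has dense range, so it is an isomorphism. The main technical point I expect is the telescoping identity establishing $\psi_n(\B_n) \subseteq \B$ when $s \leq n$; the compatibility, injectivity, and density verifications are then straightforward pointwise algebra.
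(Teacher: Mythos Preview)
Your proof is correct and follows the same overall strategy as the paper: define the compatible maps $\psi_n:\B_n\to\B$ by the formula you give, verify compatibility and injectivity, invoke the universal property, and check that the induced map is an isometric surjection via $\psi_n(1_n\otimes a)=\mu_n(a)$. The one point where you diverge is in showing $\psi_n(\B_n)\subseteq\B$: the paper uses the intrinsic description of $\B$ (established earlier in \S6 from Lemma~\ref{B property} and \cite[Lemma~1.7]{KS}) as those $\xi\in\ell^\infty(\Z,A)$ vanishing at $-\infty$ and eventually $\alpha$-periodic, and checks this description directly for $\psi^n(\xi)$ with arbitrary $\xi\in\B_n$; you instead work on spanning elements $1_s\otimes a$ and produce the explicit telescoping decomposition $\psi_n(1_s\otimes a)=\mu_n(a)+\sum_{k=s}^{n-1}[\mu_k(a)-\mu_{k+1}(\alpha(a))]$ for $s\le n$. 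Your route is more self-contained (it does not rely on the Khoshkam--Skandalis characterization of $\B$), while the paper's route handles a general $\xi$ in one stroke without passing through generators; both are perfectly valid and the rest of the argument is identical.
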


\begin{proof}
For every $n\in \Z$, define the map $\psi^{n}:\B_{n}\rightarrow \B$ by
\[
(\psi^{n}(\xi))(i)=
   \begin{cases}
      \alpha_{i-n}\big(\xi(i)\big) &\textrm{if}\empty\ \text{$i\geq n$}\\
      \xi(i) &\textrm{if}\empty\ \text{$i< n$},
   \end{cases}
\]
for every $\xi\in \B_{n}$. We only verify that each $\psi^{n}(\xi)$ belongs to $\B$. Then it is not difficult to see that each $\psi^{n}$ is indeed an embedding (injective $*$-homomorphism) of $\B_{n}$ in $\B$, as $\alpha$ is injective.
Firstly, by viewing $\B_{n}=B_{\Z}\otimes A$ as the subalgebra of $\ell^{\infty}(\Z,A)$ consisting of all elements $\xi$ such that $\lim_{i\rightarrow -\infty} \|\xi(i)\|=0$, and $\lim_{i\rightarrow \infty} \|\xi(i)\|$ exists, for every
$\varepsilon>0$, there exists $m\in \Z$ such that for all $i\geq j>m$, $\|\xi(i)-\xi(j)\|<\varepsilon$. So it follows that $\lim_{i\rightarrow -\infty} \|(\psi^{n}(\xi))(i)\|=\lim_{i\rightarrow -\infty}\|\xi(i)\|=0$,
as $(\psi^{n}(\xi))(i)=\xi(i)$ for every $i< n$. Moreover if $k=\textrm{max}\{n,m\}+1$, then for every $i\geq k$,
\begin{eqnarray*}
\begin{array}{rcl}
\|(\psi^{n}(\xi))(i)-\alpha_{i-k}\big((\psi^{n}(\xi))(k)\big)\|&=&\|\alpha_{i-n}\big(\xi(i)\big)-\alpha_{i-k}\big(\alpha_{k-n}(\xi(k))\big)\|\\
&=&\|\alpha_{i-n}(\xi(i))-\alpha_{i-n}(\xi(k))\|\\
&=&\|\alpha_{i-n}(\xi(i)-\xi(k))\|\\
&=&\|\xi(i)-\xi(k)\|< \varepsilon,
\end{array}
\end{eqnarray*}
as $i\geq k> m$. Thus $\psi^{n}(\xi)\in \B$. Next we show that for every $n\in \Z$, $\psi^{n+1}\circ \varphi_{n}=\psi^{n}$. Therefore if $\B_{\infty}$ is the direct limit of the direct system $(\B_{n},\varphi_{n})_{n\in\Z}$, then by
\cite[Proposition 11.4.1(ii)]{KR}, there is a (unique) $*$-homomorphism $\psi: \B_{\infty}\rightarrow \B$ such that $\psi\circ \varphi^{n}=\psi^{n}$, where each $\varphi^{n}$ is the canonical homomorphism (embedding) of $\B_{n}$ into $\B_{\infty}$.
For ever $\xi\in \B_{n}$, we have
\[
\psi^{n+1}\big(\varphi_{n}(\xi)\big)(i)=
   \begin{cases}
      \alpha_{i-(n+1)}\big(\varphi_{n}(\xi)(i)\big) &\textrm{if}\empty\ \text{$i\geq n+1$}\\
      \varphi_{n}(\xi)(i) &\textrm{if}\empty\ \text{$i< n+1$}.
   \end{cases}
\]
So, for $i\geq n+1$, $$\psi^{n+1}\big(\varphi_{n}(\xi)\big)(i)=\alpha_{i-(n+1)}\big(\varphi_{n}(\xi)(i)\big)=\alpha_{i-n-1}\big(\alpha(\xi(i))\big)=\alpha_{i-n}\big(\xi(i)\big)=(\psi^{n}(\xi))(i).$$ If $i< n+1$, then
$$\psi^{n+1}\big(\varphi_{n}(\xi)\big)(i)=\varphi_{n}(\xi)(i)=\xi(i)=(\psi^{n}(\xi))(i).$$ Thus we have $\psi^{n+1}\circ \varphi_{n}=\psi^{n}$. We claim that $\psi$ is actually an isomorphism
of $\B_{\infty}$ onto $\B$. Since $\alpha$ is injective,
$$\|\psi(\varphi^{n}(\xi))\|=\|\psi^{n}(\xi)\|=\|\xi\|=\|\varphi^{n}(\xi)\|.$$ Therefore, since $\cup_{n\in \Z}\varphi^{n}(\B_{n})$ is a dense subalgebra of $\B_{\infty}$, $\psi$ is an isometry. Finally, $\psi$ is onto, as one can see that
$$\psi(\varphi^{n}(1_{n}\otimes a))=\psi^{n}(1_{n}\otimes a)=\mu_{n}(a),$$ which is a spanning element of $\B$. Thus $\psi$ is indeed an isomorphism of $\B_{\infty}$ onto $\B$.
 \end{proof}


\end{document}